\renewcommand\labelenumi{\textup{\alph{enumi})}}
\renewcommand\theenumi\labelenumi
\def\@makefnmark{\hbox{(\@textsuperscript{\normalfont\@thefnmark})}}
\newcommand{\bone}{\mathbf{1}}
\newcommand{\N}{\mathbb{N}}
\newcommand{\R}{\mathbb{R}}
\newcommand{\dl}{\delta}
\newcommand{\wg}{\wedge}
\newcommand{\eps}{\varepsilon}
\newcommand{\zt}{\zeta}
\newcommand{\cl}[1]{\mkern 1.5mu\overline{\mkern-1.5mu#1\mkern-1.5mu}\mkern 1.5mu}
\newcommand{\relmiddle}[1]{\mathrel{}\middle#1\mathrel{}}
\providecommand{\ack}[1]{\par\addvspace\baselineskip
\noindent\ackname\enspace\ignorespaces#1}%
\def\subjclassname{\textup{2020} \textit{Mathematics Subject Classification:}}%
\providecommand{\subjclass}[1]{\par\addvspace\baselineskip
\noindent\subjclassname\enspace\ignorespaces#1}%
\begin{document}
\mainmatter
\title{\bfseries H\"{o}lder estimates for resolvents of time-changed Brownian motions}
\titlerunning{H\"{o}lder estimates for resolvents of time-changed Brownian motions}

\author{Kouhei Matsuura}
\authorrunning{K.~Matsuura}

\tocauthor{David Berger, Franziska K\"uhn and Ren\'e L. Schilling}

\institute{
Institute of Mathematics, University of Tsukuba,\\
1-1-1, Tennodai, Tsukuba, Ibaraki, 305-8571, Japan\\
\email{kmatsuura@math.tsukuba.ac.jp}  }

\maketitle
\begin{abstract} 
In this paper, we study time changes of Brownian motions by positive continuous additive functionals. 
Under a certain regularity condition on the associated Revuz measures, we prove that the resolvents of the time-changed Brownian motions are locally
H\"{o}lder continuous in the spatial components. We also obtain lower bounds for the indices of the H\"{o}lder continuity. 
\keywords{Brownian motion, time change, H\"{o}lder continuity, resolvent, coupling}
\subjclass{31C25, 60J35, 60J55, 60J60, 60J45}
\ack{The author expresses his gratitude to Professor Yuichi Shiozawa for very careful reading of an earlier manuscript.
This work was supported by JSPS KAKENHI Grant number 20K22299.}
\end{abstract}

\bigskip\noindent

\section{Introduction}\label{sec:intro}
Let $B=(\{B_t\}_{t \ge 0},\{P_x\}_{x \in \mathbb{R}^d})$ be a Brownian motion on the $d$-dimensional Euclidean space $\R^d$. Let $A=\{A_t\}_{t \ge 0}$ be a positive continuous additive functional (PCAF in abbreviation) of $B$. Then, the time-changed Brownian motion $\check{B}=(\{\check{B}_t\}_{t \ge 0}, \{\check{P}_x\}_{x \in F})$ by the PCAF $A$ is defined as
\begin{align*}
\check{B}_t&=B_{\tau_t},\ \check{P}_x=P_x,\quad (t ,x) \in [0,\infty) \times F.
\end{align*}
Here, we denote by $\{\tau_t\}_{t \ge 0}$ the right continuous inverse of $\{A_t\}_{t \ge 0}$, and $F$ stands for the support of $A$ (see \eqref{eq:support} below for the definition). From the Revuz correspondence (see \eqref{eq:revuz}), the PCAF $A$ induces a Borel measure $\mu$ on $\R^d$, which is called the Revuz measure of $A$. It is known that the time-changed Brownian motion $\check{B}$ becomes a $\mu$-symmetric right process on $F$ (see, e.g. \cite[Theorem~5.2.1]{CF}).
On account of this fact, in what follows, we use the symbol $B^\mu$ ($A^\mu$ and $F^\mu$, respectively) to denote $\check{B}$ ($A$ and $F$, respectively).

A typical example of Revuz measures is of the form $f\,dm$. Here, $f\colon \R^d\to [0,\infty)$ is a locally bounded Borel measurable function, and $m$ stands for the Lebesgue measure on $\R^d$. Then, we have $A_t^{f\,dm}=\int_{0}^{t}f(B_s)\,ds$, $t \ge 0.$ However, a Revuz measure $\mu$ can be singular with respect to $m$. Then, the behavior of $B^\mu$ would be quite different from that of the standard Brownian motion. Nevertheless, if $F^\mu=\R^d$, we can simply describe the Dirichlet form $(\mathcal{E},\mathcal{F}^\mu)$ of $B^\mu$ by using the extended Dirichlet space $H_{\text{\rm e}}^{1}(\mathbb{R}^d)$ of $B$. %We refer the reader to \cite[Section~1.5]{FOT} and \cite[Definition~1.1.4]{CF} for the definition of the extended Dirichlet space of a symmetric Hunt process. 
If $d \in \{1,2\}$, we see from \cite[Theorem~2.2.13]{CF} that $H_{\text{\rm e}}^1(\mathbb{R}^d)$ is identified with
\begin{align*}
\{f \in L^2_{\text{\rm loc}}(\R^d,m) \mid |\nabla f| \in L^2(\mathbb{R}^d,m)\}.
\end{align*}
Here, $L^2_{\text{\rm loc}}(\R^d,m)$ is the space of locally square integrable functions on $\R^d$ with respect to $m$, and $\nabla f$ denotes the distributional gradient of $f$. Even if $d>2$, the extended Dirichlet space is characterized with distributional derivatives (\cite[Theorem~2.2.12]{CF}). From these facts and \cite[(5.2.17)]{CF}, we find that the Dirichlet form $(\mathcal{E},\mathcal{F}^\mu)$ is identified with
\begin{align}
\mathcal{E}(f,g)&=\frac{1}{2}\int_{\R^d} (\nabla f(x),\nabla g(x))\,dm(x),\quad f,g \in \mathcal{F}^\mu, \label{eq:diri}\\
\mathcal{F}^\mu&=\{\widetilde{u} \in H_{\text{\rm e}}^{1}(\mathbb{R}^d) \mid \widetilde{u} \in L^2(\mathbb{R}^d,\mu)\}. \notag 
\end{align}
Here, we denote by $(\cdot,\cdot)$ the standard inner product on $\mathbb{R}^d$, and $\widetilde{u}$ is the quasi-continuous version of $u \in H_{\text{\rm e}}^1(\mathbb{R}^d)$. See \cite[Lemma~2.1.4 and Theorem~2.1.7]{FOT} for the existence and the uniqueness. However, even in this setting, it is generally difficult to  write down other analytical objects associated with $B^\mu$, such as  the semigroup and the resolvent. Therefore, it is non-trivial to clarify how these objects depend on $\mu$.

 In this paper, we study the continuity of the resolvent of $B^\mu$ in the spatial component.
Even though this kind of problem can be formulated for other Markov processes, the current setting allows us to quantitatively clarify how the continuity depends on $\mu$. In the main theorem of this paper (Theorem~\ref{thm:rsf}), we prove that the resolvent of $B^\mu$ is H\"{o}lder continuous in the spatial component under a certain condition on  $\mu$. The condition is given in \eqref{eq:ballm} below, and the index there represents a regularity of $\mu$. This also describes a lower bound for the index of the H\"{o}lder continuity of the resolvent. In particular, we see that the  resolvent is $(1-\eps)$-H\"{o}lder continuous if the index is sufficiently large. Condition \eqref{eq:ballm} can be  regarded as a generalized concept of the $d$-measure.  We refer the reader to \cite{FU} for basic facts on time-changed Hunt processes by PCAFs associated with $d$-measures. We also note that the Liouville measure is one of examples which satisfies $\eqref{eq:ballm}$. The reader is referred to \cite{GRV0,GRV} and references therein for more details and the time changed planar Brownian motion by the PCAF associated with the Liouvllle measure.

If $F^\mu=\R^d$, it is not very hard to see that the resolvent of $B^\mu$ is just H\"{o}lder continuous.
In fact, we see from \eqref{eq:diri} that any bounded harmonic function $h$ on $B(z,2r)$ ($z \in \R^d,\,r>0$) with respect to $B^\mu$ is also harmonic with respect to the standard Brownian motion. Here, $B(z,r)$ denotes the open ball centered at $z$ with radius $r>0$. Then, from \cite[Chapter~II. (1.3)~Proposition]{B0}, there exists a positive constant $C$ independent of $z$ and $r$ such that
\begin{align*}
|h(x)-h(y)| \le C\sup_{z \in \mathbb{R}^d}|h(z)|  \frac{|x-y|}{r},\quad x,y \in B(z,r).
\end{align*}
Furthermore, since $A^\mu$ is a homeomorphism on $[0,\infty)$ (here, we used the assumption that $F^\mu=\R^d$), $A^\mu_{\tau_{B(x,r)}}$ is identified with the exit time of $B^\mu$ from $B(x,r)$, where $\tau_{B(x,r)}$ denotes the first exit time of $B$ from $B(x,r)$. This observation and the regularity condition~\eqref{eq:ballm} lead us to a mean exit time estimate for $B^\mu$ (see also Lemma~\ref{lem:pcaf}). Then, the same argument as in \cite[(5.6) Proposition, Section~VII]{B} shows that the resolvent of $B^\mu$ is H\"{o}lder continuous. We note that this kind of argument is applicable to several situations (see, e.g., \cite[Proposition~3.3 and Theorem~3.5]{BKK}); however,  even if $\mu=m$, this only implies that the index of the  H\"{o}lder continuity is greater than or equal to $2/3$. This estimate is not sharp because $B^m$ is the standard Brownian motion and the resolvent is Lipschitz continuous. Thus, even if $F^\mu=\R^d$, our result does not directly follow from the method stated above, and implies rather sharp result.
 
 For the proof of Theorem~\ref{thm:rsf}, we use the mirror coupling of $d$-dimensional Brownian motions. The key to our proof is an inductive argument based on the strong Markov property (of the coupling) and some estimates of the coupling time (Lemmas~\ref{lem:key} and \ref{lem:excp}). Since mirror couplings of stochastic processes are universal concepts, our arguments may be useful for estimating the indices of H\"{o}lder continuity of resolvents for other time-changed Markov processes.   

The remainder of this paper is organized as follows. In Section~2, we set up a framework and state the main theorem (Theorem~\ref{thm:rsf}). 
In Section~3, we provide some preliminary estimates for PCAFs of the $d$-dimensional Brownian motion. In Section~4, we introduce some lemmas on the mirror coupling of Brownian motions, and  prove Theorem~\ref{thm:rsf}.

\vspace{0.3cm}

\noindent
{\it Notation.} In the paper, we use the following symbols and conventions.
\begin{itemize}
\item $(\cdot,\cdot)$ and $|\cdot|$ denote the standard inner product and norm of $\R^d$, respectively.
\item For $x\in\R^d$ and $r>0$, $B(x,r)$ (resp.\ $\cl{B}(x,r)$) denotes the open (resp.\ closed) ball in $\R^d$ with center $x$ and radius $r$.
\item For a subset $S \subset \R^d$ and $f \colon S \to [-\infty,\infty]$, we set $\|f\|_\infty:=\|f\|_{\infty,S}:=\sup_{x \in S}|f(x)|$.
\item For a topological space $S$, we write $\mathcal{B}_b(S)$ for the space of bounded Borel measurable functions on $S$. 
\item For $a,b \in [-\infty,\infty]$, we write $a \vee b=\max\{a,b\}$ and $a \wg b=\min \{a,b\}$.
\item $\inf\emptyset=\infty$ by convention.
\end{itemize}

\section{Main results} 
Let $B=(\{B_t\}_{t \ge 0},\{P_x\}_{x \in \mathbb{R}^d})$ be a Brownian motion on $\R^d$. The Dirichlet form is identified with
\begin{align*}
\mathcal{E}(f,g)&=\frac{1}{2}\int_{\R^d}(\nabla f(x), \nabla g(x))\,dm(x),\quad
f,g \in H^1(\R^d).
\end{align*}
Here, $H^1(\R^d)(=H_{\text{\rm e}}^1(\mathbb{R}^d) \cap L^2(\R^d,m))$ denotes the first-order Sobolev space on $\R^d$. For an open subset $U \subset \R^d$ and for a subset $A \subset \R^d$, we define 
\begin{align*}
\text{cap}(U)&=\inf\left\{ \mathcal{E}(f,f)+\int_{\R^d}f^2\,dm \relmiddle|  f \in H^1(\R^d),\ f \ge 1,\, m\text{-a.e. on }U\right\},\\
\text{Cap}(A)&=\inf\left\{ \text{cap}(U)  \mid A \subset U\text{ and } U\text{ is an open subset of }\R^d\right\}.
\end{align*}
A non-negative Radon measure $\mu$ on $\R^d$ is said to be {\it smooth} if $\mu(A)=0$ for any $A \subset \R^d$ with $\text{Cap}(A)=0$. For a smooth measure $\mu$, by \cite[Theorem~4.1.1]{CF}, there exists a unique PCAF $A^\mu=\{A_t^{\mu}\}_{t \ge0}$ of $B$ such that for any non-negative functions $f,g \in \mathcal{B}_{b}(\R^d)$ and $\alpha>0$,
\begin{align}
&\int_{\R^d}E_{x}\left[\int_{0}^{\infty}e^{-\alpha t}f(B_t)\,dA_t^{\mu} \right]g(x)\,dm(x) \notag \\
&=\int_{\R^d}E_x\left[\int_{0}^{\infty}e^{-\alpha t}g(B_t)\,dt \right]f(x)\,d\mu(x),\label{eq:revuz}
\end{align}
where $E_x$ denotes the expectation under $P_x$. 
See \cite[Section~4]{CF} and \cite[Section~5]{FOT} for  the definition and further details on PCAFs. We also note that the exceptional set of $A^{\mu}$ can be taken to be empty (see \cite[Theorem~4.1.11]{CF}). 

Let $\{\tau_t^\mu\}_{t \ge 0}$ be the right continuous inverse of $A^\mu$. We define 
\begin{align*}
B_t^{\mu}&=B_{\tau_t^{\mu}},\ P_x^{\mu}=P_x,\quad (t ,x) \in [0,\infty) \times F^\mu,
\end{align*}
where $F^\mu$ denotes the support of $A^\mu$:
\begin{align}\label{eq:support}
 F^\mu=\{x \in \R^d \mid \inf\{t>0 \mid A_t^\mu>0\}=0,\, P_x\text{-a.s.}\}.
\end{align}
We note that $F^\mu$ is a nearly Borel subset with respect to $B$ (see the paragraph after \cite[(A.3.11)]{CF}). The support $F^\mu$ is also regarded as a topological subspace of $\R^d$.
By \cite[Theorems~5.2.1 and A.3.11]{CF}, $B^\mu=(\{B_t^{\mu}\}_{t \ge 0}, \{P_x^{\mu}\}_{x \in F^\mu})$ is a $\mu$-symmetric right process on $F^\mu$. The resolvent  $\{G_\alpha^\mu\}_{\alpha>0}$ is given by
\[
G_\alpha^\mu f(x)=E_{x}\left[\int_{0}^\infty e^{-\alpha t}f(B_t^\mu)\,dt \right],\quad \alpha>0,\,\,f \in \mathcal{B}_b(F^\mu), \,\, x \in F^\mu.
\]
Let $\mathcal{B}_b^{\ast}(\R^d)$ denote the space of bounded universally measurable functions on $\R^d$. That is, any $f \in \mathcal{B}_b^{\ast}(\R^d)$ is bounded and measurable with respect to the $\sigma$-field $\mathcal{B}^\ast (\R^d)$; the family of universally measurable subsets of $\R^d$: $\mathcal{B}^\ast (\R^d):=\bigcap_{\mu \in \mathcal{P}(\R^d)}\mathcal{B}^\mu (\R^d)$. Here, $\mathcal{P}(\R^d)$ denotes the family of all probability measures on $\R^d$ and $\mathcal{B}^\mu (\R^d)$ is the completion of the Borel $\sigma$-field $\mathcal{B}(\R^d)$ on $\R^d$ with respect to $\mu \in \mathcal{P}(\R^d).$
For $\alpha>0$, $f \in \mathcal{B}_b^\ast(\R^d)$, and $x \in \R^d$, we define
\[
V_{\alpha}^\mu f(x)=E_{x}\left[\int_{0}^{\infty}e^{-\alpha A_t^\mu} f(B_t)\,dA_t^\mu \right].
\]
We see from \cite[Exercise~A.1.29]{CF} that $F^\mu$ is a universally measurable subsets of $\R^d$. 
By noting this fact and  using \cite[Lemma~A.3.10]{CF}, we have
\begin{align}
G_\alpha^\mu f(x)=V_{\alpha}^\mu f(x).\label{eq:resol}
\end{align}
for any $\alpha>0$, $f \in \mathcal{B}_b(F^\mu)$, and $x \in F^\mu$.

Now we are in a position to state our main theorem.
\begin{theorem}\label{thm:rsf}
Let $p \in \R^d$ and assume that there exist $\kappa>d-2$, $R \in (0,1]$, and $K>0$ such that 
\begin{equation}
\mu(B(x,r)) \le K r^{\kappa} \label{eq:ballm}
\end{equation}
for any $r \le R$ and $x \in \R^d$ with $|x-p| \le r$.
 Then, for any $\alpha>0$ and $\eps \in (0, (2-d+\kappa)/(3-d+\kappa))$, there exists $C>0$ depending  on $d$, $p$, $\kappa$, $K$, $R$, $\eps$, and $\alpha$ such that 
\begin{equation}\label{eq:hol}
\left |V_{\alpha}^\mu f(x)-V_{\alpha}^\mu f(y) \right| \le C\|f\|_{\infty} |x-y|^{\{(2-d+\kappa) \wg 1\}-\eps} 
\end{equation}
for any $f \in \mathcal{B}_b^{\ast}(\R^d)$ and  $x,y\in B(p,2^{-C'}R)$, where $C'$ is a positive number depending on $d$, $\eps$ and $\kappa$. In particular, we have 
\begin{equation}\label{eq:hol*}
\left |G_\alpha^\mu f(x)-G_\alpha^\mu f(y) \right| \le C\|f\|_{\infty} |x-y|^{\{(2-d+\kappa) \wg 1\}-\eps} 
\end{equation}
for any $f \in \mathcal{B}_{b}(F^\mu)$ and  $x,y\in F^\mu \cap B(p,2^{-C'}R)$.
\end{theorem}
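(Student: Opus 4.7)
The strategy, foreshadowed in the introduction, is to compare $V_\alpha^\mu f(x)$ and $V_\alpha^\mu f(y)$ via a mirror coupling $(B,\tilde B)$ of two Brownian motions with $B_0=x$, $\tilde B_0=y$ and then to bootstrap the H\"older exponent iteratively, starting from the trivial bound $\|V_\alpha^\mu f\|_\infty\le\|f\|_\infty/\alpha$. Let $T$ denote the coupling time and, for a free parameter $r\in(0,R]$ to be chosen as a small power of $|x-y|$, let $\tau$ be the first exit time of either $B$ or $\tilde B$ from $B(p,r)$. Setting $S:=T\wedge\tau$ and applying the strong Markov property at $S$ we obtain the decomposition
\begin{align*}
V_\alpha^\mu f(x)-V_\alpha^\mu f(y)
&=\mathbb E\!\left[\int_0^S e^{-\alpha A_t^\mu(B)}f(B_t)\,dA_t^\mu(B)-\int_0^S e^{-\alpha A_t^\mu(\tilde B)}f(\tilde B_t)\,dA_t^\mu(\tilde B)\right]\\
&\quad+\mathbb E\!\left[e^{-\alpha A_S^\mu(B)}V_\alpha^\mu f(B_S)-e^{-\alpha A_S^\mu(\tilde B)}V_\alpha^\mu f(\tilde B_S)\right].
\end{align*}

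The first expectation is controlled in modulus by $2\|f\|_\infty \mathbb E[A_\tau^\mu(B)+A_\tau^\mu(\tilde B)]$, which by Lemma~\ref{lem:pcaf} (Revuz formula combined with \eqref{eq:ballm}) is at most $C\|f\|_\infty r^{2-d+\kappa}$. Rewriting the second expectation as $\mathbb E[(e^{-\alpha A_S^\mu(B)}-e^{-\alpha A_S^\mu(\tilde B)})V_\alpha^\mu f(B_S)]+\mathbb E[e^{-\alpha A_S^\mu(\tilde B)}(V_\alpha^\mu f(B_S)-V_\alpha^\mu f(\tilde B_S))]$, the first piece is controlled via $|e^{-a}-e^{-b}|\le|a-b|$ and $\|V_\alpha^\mu f\|_\infty\le\|f\|_\infty/\alpha$ by the same PCAF estimate, hence again $\le C\|f\|_\infty r^{2-d+\kappa}$; the second piece vanishes on $\{S=T\}$ since $B_T=\tilde B_T$ there, so only the bad event $\{S=\tau\}$ contributes, with $|B_\tau-\tilde B_\tau|\le 2r$. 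If we know inductively that $V_\alpha^\mu f$ is $\beta$-H\"older on $B(p,2r)$ with constant $C_\beta\|f\|_\infty$, this term is at most $C_\beta\|f\|_\infty(2r)^\beta \mathbb P(S=\tau)$, and Lemma~\ref{lem:excp}---via the one-dimensional projection of the mirror coupling onto the line through $x-y$---yields $\mathbb P(S=\tau)\le C|x-y|/r$.

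Collecting everything gives $|V_\alpha^\mu f(x)-V_\alpha^\mu f(y)|\le C_1\|f\|_\infty r^{2-d+\kappa}+C_2 C_\beta\|f\|_\infty r^{\beta-1}|x-y|$. Setting $r=|x-y|^\theta$ and balancing the two exponents produces the recursion $\gamma_{n+1}=(2-d+\kappa)/(1+(2-d+\kappa)-\gamma_n)$, $\gamma_0=0$, which converges monotonically to $(2-d+\kappa)\wedge 1$; finitely many iterations (the number depending on $\eps$, $\kappa$, $d$) then suffice to reach any exponent $\{(2-d+\kappa)\wedge 1\}-\eps$. The bound \eqref{eq:hol*} for $G_\alpha^\mu$ is immediate from \eqref{eq:resol}. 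The two main obstacles are: (i) bookkeeping the radius---since $B_\tau,\tilde B_\tau$ lie on $\partial B(p,r)$, each step needs the previous H\"older estimate on a ball strictly larger than the target, forcing the ball to shrink by a fixed factor at each of the finitely many iterations, which is the origin of the constant $2^{-C'}$ in the statement; and (ii) the H\"older-$1$ barrier, namely that $\mathbb P(S=\tau)$ carries only a linear factor in $|x-y|/r$, so the iteration cannot push the exponent above $1$, matching the $\wedge 1$ in \eqref{eq:hol}.
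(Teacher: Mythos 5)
Your proposal captures the two essential ideas of the paper's proof: the mirror coupling $(Z^x,\widetilde{Z}^y)$ and a bootstrap on the H\"older exponent driven by the strong Markov property at the minimum of the coupling time $\xi_{x,y}$ and a small exit time $\tau$, balancing the PCAF estimate (Lemma~\ref{lem:pcaf}) against the probability estimate for $\{\tau<\xi_{x,y}\}$ (Lemma~\ref{lem:excp}). Your recursion $\gamma_{n+1}=(2-d+\kappa)/(3-d+\kappa-\gamma_n)$, started at $\gamma_0=0$, is exactly the paper's (written as $q_{n,\kappa,\eps}$ with $r_n=\sum_{l=1}^n(2-d+\kappa)^l$), and it converges to $(2-d+\kappa)\wg 1$.

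There are, however, a few genuine differences and one real slip. \emph{First}, structurally, the paper does not bootstrap on $V_\alpha^\mu f$ itself: it introduces $\mathcal{I}_{x,y}=E_{x,y}[A^{\mu,x}_{\xi_{x,y}}\wg 1]$ (and $\widetilde{\mathcal{I}}_{x,y}$), proves the H\"older bound for $\mathcal{I}_{x,y}$ by induction (Lemmas~\ref{lem:ind1}, \ref{lem:ind}), and only at the very end converts this to \eqref{eq:hol} via $|V_\alpha^\mu f(x)-V_\alpha^\mu f(y)|\le 2(1+\alpha^{-1})(\mathcal{I}_{x,y}+\widetilde{\mathcal{I}}_{x,y})$. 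Iterating on $\mathcal{I}_{x,y}$ keeps $\alpha$ out of the recursion entirely; iterating directly on $V_\alpha^\mu f$, as you do, re-introduces the $\alpha^{-1}$-factor at every step, which is harmless only because the number of steps is finite. \emph{Second}, in the bad event you bound $|V_\alpha^\mu f(B_\tau)-V_\alpha^\mu f(\widetilde B_\tau)|$ crudely by $C_\beta\|f\|_\infty(2r)^\beta$; the paper instead applies H\"older's inequality together with the martingale estimate of Lemma~\ref{lem:key}, giving $E[|Z^x_\tau-\widetilde Z^y_\tau|^{q_n}:\tau<\xi]\le |x-y|^{q_n}P(\tau<\xi)^{1/b_n}$. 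A short computation shows the two bounds produce the same optimal exponent at each step (because of the specific choice $b_n=r_n+1$), so this substitution is legitimate, though it is worth noting that it is a coincidence of the exponent bookkeeping rather than a free lunch. \emph{Third} (the real slip), you take $\tau$ to be the exit time from $B(p,r)$ with $r$ a small power of $|x-y|$; since at stage $n+1$ the points $x,y$ range over a fixed ball $B(p,\rho_{n+1})$ while $r=|x-y|^\chi$ can be arbitrarily small, the starting point $x$ need not lie in $B(p,r)$ and the exit time would be $0$. The exit ball must be centered at $x$ (or $y$), as in the paper's $B(x,2^{-n-1}R|x-y|^\chi)$; then $\widetilde Z^y_\tau$ is the reflection of $Z^x_\tau$ across the mediator plane and $|Z^x_\tau-\widetilde Z^y_\tau|\le 2r+|x-y|\lesssim r$, preserving your estimate. \emph{Finally}, two small inaccuracies: the probability bound is $P(\tau<\xi)\le C|x-y|^{1-\chi-\eps}$ rather than $C|x-y|/r$ (the $\eps$-loss comes from the Gaussian tail in \eqref{ref:exit}), and for $d=2$ the PCAF estimate carries a logarithmic factor $-r^\kappa\log r$ rather than $r^{2-d+\kappa}$; both losses are absorbed in the final $\eps$, exactly as you suggest, but should be acknowledged.
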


%We don't know whether the semigroup of $B^\mu$ possesses the same estimate as \eqref{eq:hol*}.

\section{Preliminary lemmas}

For an open subset $U \subset \R^d$, we denote by $U \cup \{\partial_U\}$ the one-point compactification. We set
$
\tau_U=\inf\{t \in [0,\infty) \mid B_t \notin U\}.
$ Then, the absorbing Brownian motion $B^{U}=(\{B_t^U\}_{t \ge0},\{P_x\}_{x \in U})$ on $U$ is defined as
\begin{align*}
B_{t}^{U}=
\begin{cases}
B_t ,\quad t<\tau_{U}, \\
\partial_{U},\quad t \ge \tau_{U}.
\end{cases}
\end{align*}
We write $p^U=p_t^U(x,y) \colon (0,\infty) \times U \times U \to [0,\infty)$ for the transition density of $B^U$. That is, $p^U$ is the jointly continuous function such that
\begin{align*}
P_x(B_t^ U \in dy)=p_{t}^{U}(x,y)\,dm(y),\quad t>0,\,x \in U.
\end{align*}
The Green function of $B^U$ is defined by 
\[
g_U(x,y)=\int_{0}^\infty p_t^U(x,y)\,dt,\quad x,y\in U.
\]

\begin{lemma}\label{lem:ppcaf}
Let $U \subset \R^d$ be an open subset. Then, for any $x \in U$, $t>0$, and non-negative $f \in \mathcal{B}_{b}(U)$,
\begin{align*}%\label{eq:ppcaf}
E_{x}\left[\int_{0}^{t \wg \tau_U}f(B_s)\,dA_s^{\mu} \right]=\int_{U}\left(\int_{0}^t p_{s}^{U}(x,y)\,ds \right)f(y)\,d\mu(y).
\end{align*}
In particular, we have
\begin{align*}
E_{x}\left[\int_{0}^{\tau_U}f(B_s)\,dA_s^{\mu} \right]=\int_{U}g_U(x,y)f(y)\,d\mu(y).
\end{align*}
\end{lemma}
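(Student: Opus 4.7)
The plan is to prove the identity first in its $\alpha$-resolvent (i.e., Laplace-transformed) form and then invert the Laplace transform in $t$. Concretely, I would first show that for every $\alpha>0$ and $x\in U$,
\[
E_x\!\left[\int_0^{\tau_U} e^{-\alpha s} f(B_s)\,dA_s^\mu\right] = \int_U g_\alpha^U(x,y) f(y)\,d\mu(y),
\]
where $g_\alpha^U(x,y):=\int_0^\infty e^{-\alpha s}p_s^U(x,y)\,ds$. The starting point is the pointwise form of the Revuz correspondence \eqref{eq:revuz},
\[
E_x\!\left[\int_0^\infty e^{-\alpha s} f(B_s)\,dA_s^\mu\right] = \int_{\R^d} g_\alpha(x,y) f(y)\,d\mu(y),
\]
with $g_\alpha$ the $\alpha$-Green function of Brownian motion. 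Splitting the integral at $\tau_U$ and applying the strong Markov property yields
\[
E_x\!\left[\int_{\tau_U}^\infty e^{-\alpha s} f(B_s)\,dA_s^\mu\right] = E_x\!\left[e^{-\alpha\tau_U}\int_{\R^d} g_\alpha(B_{\tau_U},y) f(y)\,d\mu(y)\right];
\]
subtracting and invoking the standard decomposition $g_\alpha^U(x,y) = g_\alpha(x,y) - E_x[e^{-\alpha\tau_U} g_\alpha(B_{\tau_U},y)]$ for $x,y\in U$ then gives the displayed $\alpha$-resolvent identity.

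To pass back to the finite-time statement, I would apply Fubini to both sides. Using $\alpha\int_s^\infty e^{-\alpha t}\,dt = e^{-\alpha s}$,
\[
\alpha\int_0^\infty e^{-\alpha t}\, E_x\!\left[\int_0^{t\wg\tau_U} f(B_s)\,dA_s^\mu\right] dt = E_x\!\left[\int_0^{\tau_U} e^{-\alpha s} f(B_s)\,dA_s^\mu\right],
\]
and analogously the Laplace transform in $t$ of the right-hand side of the lemma equals $\int_U g_\alpha^U(x,y) f(y)\,d\mu(y)$. Hence the two sides of the lemma, viewed as functions of $t\in[0,\infty)$, have identical Laplace transforms for every $\alpha>0$; as both are non-decreasing, continuous, and bounded on compact intervals in $t$, uniqueness of the Laplace transform forces equality for every $t>0$. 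The second assertion of the lemma then follows from the first by monotone convergence as $t\to\infty$.

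The main obstacle will be the pointwise (rather than merely $m$-almost-everywhere) version of the Revuz correspondence invoked at the outset: \eqref{eq:revuz} is formulated only after integration against $g\,dm(x)$. To upgrade it to every $x\in\R^d$, I would use that both sides are $\alpha$-excessive in $x$ and that, for Brownian motion, $p_s(x,y)$ is jointly continuous; fine continuity of $\alpha$-excessive functions together with the fact that exceptional sets of $B$ have zero capacity then forces the two excessive versions to coincide at every point of $\R^d$. All other ingredients---strong Markov, Fubini, and Laplace inversion---are routine.
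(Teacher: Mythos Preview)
Your route is genuinely different from the paper's. The paper never passes through the $\alpha$-resolvent: it invokes \cite[Proposition~4.1.10]{CF} to get the \emph{fixed-$t$} identity integrated against an arbitrary $g\,dm$, deduces the equality for $m$-a.e.\ $z\in U$, and then upgrades to every $x\in U$ by the Markov property of $B^U$ at a small time $u$, writing $E_x[\int_0^{t\wedge\tau_U}\cdots]=\lim_{u\downarrow 0}E_x\bigl[E_{B_u^U}[\int_0^{(t-u)\wedge\tau_U}\cdots]\bigr]$ and feeding the a.e.\ identity into the inner expectation. Your excessiveness argument for the pointwise Revuz formula is a legitimate alternative to this Markov-at-time-$u$ trick; both ultimately exploit absolute continuity of the Brownian semigroup.

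Two points in your plan need more care, though. First, the strong Markov split and Hunt's formula only let you \emph{subtract} to reach the killed identity when $u(x):=\int g_\alpha(x,y)f(y)\,d\mu(y)<\infty$; the lemma is stated for an arbitrary smooth $\mu$, and for such $\mu$ this $\alpha$-potential can be $+\infty$ at the very point $x$ you are considering, so the difference becomes $\infty-\infty$. The fix is to run the Revuz correspondence and the excessiveness upgrade directly for the killed process $B^U$ and its PCAF $\{A^\mu_{t\wedge\tau_U}\}_{t\ge0}$, avoiding the full-space detour. Second, your Laplace-inversion step asserts that both sides are ``bounded on compact intervals'' in $t$; for general smooth $\mu$ this can fail, and if both Laplace transforms are identically $+\infty$ the uniqueness theorem gives you nothing. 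The paper's fixed-$t$ route sidesteps both issues: it never subtracts potentially infinite quantities, and the Markov-at-time-$u$ upgrade delivers the identity in $[0,\infty]$ directly.
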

\begin{proof}
We fix $t>0$ and non-negative functions $f,g \in \mathcal{B}_{b}(U)$. We may assume that $f$ is compactly supported.
By \cite[Proposition~4.1.10]{CF}, we have 
\begin{align}\label{eq:ppcaf2}
&\int_{U}E_{z}\left[ \int_{0}^{t \wg \tau_U}f(B_s)\,dA_s^{\mu} \right]g(z)\,dm(z)=\int_{0}^t \left(\int_{U}(P_{s}^{U}g)(x) f(x)\,d\mu(x)\right)\,ds.
\end{align}
We use Fubini's theorem to obtain that
\begin{align} 
&\int_{0}^t \left(\int_{U}(P_{s}^{U}g)(x) f(x)\,d\mu(x)\right)\,ds \notag \\
&=  \int_{U} \left(\int_{0}^t \left(\int_{U}p_{s}^{U}(x,z)f(x)\,d\mu(x) \right)\,ds \right) g(z)\,dm(z). \label{eq:ppcaf3}
\end{align}
Because $\mu$ is a Radon measure, by letting $g=\bone_{U}$ in \eqref{eq:ppcaf2} and \eqref{eq:ppcaf3}, we see that
\[ E_{(\cdot)}\left[ \int_{0}^{t \wg \tau_U}f(B_s)\,dA_s^{\mu} \right]   \quad\text{ and }\quad
\int_{0}^t \left(\int_{U}p_{s}^{U}(x,\cdot)f(x)\,d\mu(x) \right)\,ds
\]
are integrable on $U$ with respect to $m$. 
%Both sides of \eqref{eq:ppcaf2} and \eqref{eq:ppcaf3} are finite since $\mu$ is a Radon measure on $U$.
 Moreover, because $g$ is arbitrarily taken, \eqref{eq:ppcaf2} and \eqref{eq:ppcaf3} imply that for $m\text{-a.e. } z\in U$,
\begin{align}\label{eq:ppcaf4}
E_{z}\left[ \int_{0}^{t \wg \tau_U}f(B_s)\,dA_s^{\mu} \right] 
&=\int_{0}^t \left(\int_{U}p_{s}^{U}(x,z)f(x)\,d\mu(x) \right)\,ds.
\end{align}
By following the convention that $f(\partial_U)=0$, we see that the left-hand side of \eqref{eq:ppcaf4} is equal to $E_{z}[ \int_{0}^{t}f(B_s^{U})\,dA_{s \wg \tau_U}^{\mu}]$. Hence, we have
for $m\text{-a.e. } z\in U$,
\begin{align}\label{eq:ppcaf4-2}
E_{z}\left[ \int_{0}^{t}f(B_s^{U})\,dA_{s \wg \tau_U}^{\mu} \right]=\int_{0}^t \left(\int_{U}p_{s}^{U}(x,z)f(x)\,d\mu(x) \right)\,ds.
\end{align}
We see from \cite[Exercise~4.1.9~(iii)]{CF} that $\{A_{s \wg \tau_U}^\mu \}_{s \ge 0}$ is the PCAF of $B^U$. By using the additivity, the Markov property of $B^U$, and \eqref{eq:ppcaf4-2}, we obtain that for any $x \in U$,
\begin{align*}
&E_{x}\left[ \int_{0}^{t \wg \tau_U}f(B_s)\,dA_s^{\mu} \right]=\lim_{u \downarrow 0}E_{x}\left[ \int_{u}^{t}f(B_s^{U})\,dA_{s \wg \tau_U}^{\mu} \right]\\
&=\lim_{u \downarrow 0}E_{x}\left[E_{B_u^U} \left[\int_{0}^{t-u}f(B_s^{U})\,dA_{s \wg \tau_U}^{\mu} \right]\right]\\
&=\lim_{u \downarrow 0 }\int_{U}p_{u}^{U}(x,z) \left(\int_{0}^{t-u} \left(\int_{U}p_{s}^{U}(y,z)f(y)\,d\mu(y) \right)\,ds\right)\,dm(z) \\
&=\lim_{u \downarrow 0 } \int_{u}^{t} \left(\int_{U}p_{s}^{U}(x,y)f(y)\,d\mu(y) \right)\,ds=\int_{0}^{t} \left(\int_{U}p_{s}^{U}(x,y)f(y)\,d\mu(y) \right)\,ds,
\end{align*}
which completes the proof.  ``In particular'' part immediately follows from the monotone convergence theorem. \qed
\end{proof}

Let $d \ge 2,$ $r \in (0,1)$, and $x,y \in \R^d$. Then, by \cite[Example~1.5.1]{FOT}, 
\begin{align}\label{eq:green1}
g_{B(x,r)}(x,y)&=
\left\{
    \begin{alignedat}{2}
&-\frac{1}{\pi} \log|x-y|, &\quad d=2,\\
&\frac{\Gamma(d/2-1)}{2\pi^{d/2}}  |x-y|^{2-d},&\quad d \ge 3.
         \end{alignedat}
    \right. 
\end{align}
Here, $\Gamma$ denotes the gamma function. If $d=1$, we see from \cite[Lemma~20.10]{K} that for any $a,b \in \R$ with $a<b$,
\begin{align}\label{eq:green2}
g_{(a,b)}(x,y) =\frac{2(x \wedge y-a)(b-x \vee y)}{b-a},\quad x,y \in (a,b).
\end{align}
For $s \in (0,1]$ and $t>0$, we define 
\begin{align*}
\zt_d(s,t)=
\begin{cases}
s^{2-d+t},&\quad d\ge 3\text{ or }d=1,\\
-s^{t}\log s,&\quad d=2.\\
\end{cases}
\end{align*}
\begin{lemma}\label{lem:pcaf}
Let $p \in \R^d$ and take constants $\kappa >d-2$, $R \in (0,1]$, and $K>0$ so that \eqref{eq:ballm} holds. 
Then, there exists $C \in (0,\infty)$ depending on $d$, $p$, $\kappa$, $R$,  and $K$ such that for any $r \in (0, R/2]$ and $x \in B(p,r)$
\begin{align*}
&\int_{B(x,r)}g_{B(x,r)}(x,y)\,d\mu(y) \le C \zeta_d(r,\kappa).
\end{align*}
In particular,  we have $
E_{x}[A_{\tau_{B(x,r)}}^{\mu}]\le   C \zeta_d(r,\kappa).$
\end{lemma}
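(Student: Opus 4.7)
The plan is to treat $d=1$ directly and to handle $d \ge 2$ by a dyadic decomposition of $B(x,r)$ around $x$, using the explicit Green function bounds \eqref{eq:green1}--\eqref{eq:green2} on each annulus together with the ball condition \eqref{eq:ballm} at each scale.

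For $d=1$, \eqref{eq:green2} gives $g_{B(x,r)}(x,y) = r-|x-y| \le r$. Since $x \in B(p,r)$ and $r \le R/2 \le R$, the condition \eqref{eq:ballm} applies at $(x,r)$ and yields $\mu(B(x,r)) \le K r^{\kappa}$, hence $\int_{B(x,r)} g_{B(x,r)}(x,\cdot)\,d\mu \le r\,\mu(B(x,r)) \le K r^{\kappa+1} = K\zeta_1(r,\kappa)$.

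For $d \ge 2$, set $r_k = 2^{-k} r$ and $A_k = B(x,r_k) \setminus B(x,r_{k+1})$ for $k \ge 0$. On $A_k$, \eqref{eq:green1} gives $g_{B(x,r)}(x,y) \le c_d\,r_{k+1}^{2-d}$ when $d \ge 3$ and $g_{B(x,r)}(x,y) \le \pi^{-1}|\log r_{k+1}|$ when $d=2$, so each $\int_{A_k} g\,d\mu$ is bounded by these quantities times $\mu(A_k) \le \mu(B(x,r_k))$. For the scales with $r_k \ge |x-p|$, \eqref{eq:ballm} applies directly at $(x,r_k)$ and gives $\mu(B(x,r_k)) \le K r_k^{\kappa}$; for the finer scales $r_k < |x-p|$, the condition cannot be invoked at $(x,r_k)$, so I would introduce an auxiliary center $z$ on the segment from $x$ to $p$ with $|z-p| = (|x-p|+r_k)/2$, use the inclusion $B(x,r_k) \subset B(z,(|x-p|+r_k)/2)$, and apply \eqref{eq:ballm} at $(z,(|x-p|+r_k)/2)$. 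Summed, the large-scale contribution forms a geometric series in $2^{-k(\kappa - d + 2)}$, convergent precisely because $\kappa > d-2$, which sums to $C r^{\kappa + 2 - d}$; the logarithmic factor in the Green function bound for $d = 2$ turns $r^{\kappa}$ into $-r^{\kappa}\log r = \zeta_2(r,\kappa)$, matching the claim. The ``in particular'' conclusion $E_x[A^\mu_{\tau_{B(x,r)}}] \le C\zeta_d(r,\kappa)$ is immediate from Lemma~\ref{lem:ppcaf}.

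The delicate point I anticipate is the fine-scale part of the sum when $|x-p| > 0$: once $r_k < |x-p|$, the covering-based bound degenerates to roughly $\mu(B(x,r_k)) \lesssim |x-p|^{\kappa}$, independent of $k$, so a naive term-by-term estimate of $\sum_k r_{k+1}^{2-d}\,\mu(A_k)$ diverges for $d \ge 2$. The remedy should exploit that every such $A_k$ sits in the common thin annular region $\{y : \bigl||y-p|-|x-p|\bigr| \le r_k\}$ around the sphere of radius $|x-p|$ centered at $p$, and then bound the combined contribution by telescoping with \eqref{eq:ballm} applied at $(p,|x-p|+r_k)$, so that the fine-scale sum is absorbed into the already-controlled large-scale estimate.
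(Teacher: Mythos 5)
You are right to flag the fine-scale tail $r_k < |x-p|$ as the delicate point, and right that the naive term-by-term estimate diverges there, but your proposed remedy does not close the gap. Comparing $A_k$ with the thin shell $\{y: \bigl||y-p|-|x-p|\bigr| \le r_k\}$ and ``telescoping'' against concentric balls about $p$ only produces the differences $\mu(B(p,|x-p|+r_k)) - \mu(B(p,|x-p|-r_k))$, and \eqref{eq:ballm} gives no control on these: it is a one-sided bound on each term, not a modulus of continuity of $\rho \mapsto \mu(B(p,\rho))$. More fundamentally, nothing can rescue the fine-scale sum from the stated hypothesis. Since \eqref{eq:ballm} constrains only balls whose closure contains $p$, it is essentially a single-centre Frostman bound on $\mu(B(p,\rho))$, and it permits a mass of size about $K|x-p|^\kappa$ to sit on an arbitrarily thin shell $B(x,\eps)\setminus B(x,\eps/2)$ around $x$. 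Concretely, take $d=3$, $\kappa=2$, $p=0$, $R=1$, $r=1/4$, $x=(1/8,0,0)$, and let $\mu_\eps$ be the uniform measure of total mass $K/1024$ on that shell. For $\eps \le 1/32$ every ball $B(z,\rho)$ with $|z|\le\rho$ that meets the shell has $\rho \ge (|x|-\eps)/2 \ge 3/64 > 1/32$, so $\mu_\eps(B(z,\rho)) \le K/1024 < K\rho^2$ and \eqref{eq:ballm} holds; yet $\int_{B(x,r)} g_{B(x,r)}(x,\cdot)\,d\mu_\eps$ is of order $K\eps^{-1}$ and tends to $\infty$ as $\eps\to 0$, while $C\zeta_3(r,\kappa)=Cr$ is fixed. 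So the conclusion of the lemma cannot be recovered by any decomposition.

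For what it is worth, the paper's own proof has the same defect, only unacknowledged: after deriving \eqref{eq:ballm2} it bounds $\mu(B(x,r2^{-(k-1)})) \le K(r2^{-(k-1)})^\kappa$ for every $k\ge 1$, which requires $|x-p|\le r2^{-(k-1)}$ and fails for $k\ge 2$ unless $x=p$. That is exactly the uniform local bound $\mu(B(x,s))\le Ks^\kappa$ for all small $s$ which you correctly observed is not granted by \eqref{eq:ballm}. (The same mismatch reappears when Lemma~\ref{lem:ind1} invokes the present lemma with $r=R|x-y|^\chi/2$, which can be far smaller than $|x-p|$.) Under the evidently intended hypothesis, namely \eqref{eq:ballm} for all $x$ with, say, $|x-p|\le R$, your large-scale dyadic computation already covers every annulus, there is no fine-scale case at all, and both your argument and the paper's become correct with no further work.
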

\begin{proof}
In view of \eqref{eq:ballm}, we have for any $r \in (0, R/2]$ and $x \in  B(p,r)$,
\begin{align}
\mu( B(x,r)) \le K r^{\kappa}.\label{eq:ballm2}
\end{align}
Therefore, when $d=1$, we use \eqref{eq:green2} to obtain that 
\begin{align*}
\int_{B(x,r)}g_{B(x,r)}(x,y)\,d\mu(y) \le 4 Kr^{\kappa+1}.
\end{align*}
Equation~\eqref{eq:green1} implies that for any $k\in \N$,  
\begin{align*}
\sup_{y \in \R^d \setminus B(x,r2^{-k})}g_{B(x,r)}(x,y) \le
\left\{
    \begin{alignedat}{2}
&-\frac{1}{\pi}\log(r2^{-k}), &\quad d=2,\\
&\frac{\Gamma(d/2-1)}{2\pi^{d/2}} (r2^{-k})^{2-d}, &\quad d\ge 3.
         \end{alignedat}
    \right. 
\end{align*}
Thus, for $d=2$, we obtain from \eqref{eq:ballm2} that
\begin{align*}
\int_{B(x,r)}  g_{B(x,r)}(x,y)\,d\mu(y) 
&=  \sum_{k=1}^{\infty}\int_{B(x,r2^{-(k-1)}) \setminus B(x,r2^{-k}) }g_{B(x,r)}(x,y)\,d\mu(y) \\
&\le  -\frac{K}{\pi} \sum_{k=1}^{\infty}  (r2^{-(k-1)})^{\kappa}\log(r2^{-k})\le -Cr^{\kappa} \log r.
 \end{align*}
Here, $C$ is a positive constant depending on $\kappa$ and $K$. 
For $d\ge 3$, we similarly use \eqref{eq:ballm2} to obtain that
\begin{align*}
\int_{B(x,r)}  g_{B(x,r)}(x,y)\,d\mu(y) &\le   \frac{K\Gamma(d/2-1)}{2\pi^{d/2}}  \sum_{k=1}^{\infty} (r2^{-(k-1)})^{\kappa }  (r2^{-k})^{2-d}\\
&\le \left(\frac{K\Gamma(d/2-1)2^{\kappa }}{2\pi^{d/2}}   \sum_{k=1}^\infty 2^{-\{(2-d)+\kappa \}k}\right) r^{2-d+\kappa }.
 \end{align*}
 Because $\kappa >d-2$, we have \[
\int_{B(x,r)}  g_{B(x,r)}(x,y)\,d\mu(y)  \le Cr^{2-d+\kappa}.\]
Here, $C$ is a positive constant depending on $d$, $K$ and $\kappa.$ ``In particular'' part immediately follows from Lemma~\ref{lem:ppcaf}. \qed
\end{proof}

\section{Proof of Theorem~\ref{thm:rsf}}

 Let $x,y \in \R^d$ and $\{W_t\}_{t \ge 0}$ a $d$-dimensional Brownian motion starting at the origin. The mirror coupling $(Z^x,\widetilde{Z}^y)=(\{Z_t^x\}_{t\ge 0},\{\widetilde{Z}_t^y\}_{t \ge 0})$ of $d$-dimensional Brownian motions is defined as follows:
\begin{itemize}
\item For any $t< \inf\{s>0 \mid Z_s^x=\widetilde{Z}_s^y\}$,
\begin{align}
Z_t^x&=x+W_t, \quad \notag \\
\widetilde{Z}_t^y&=y+W_t-2 \int_{0}^{t} \frac{Z_s^x-\widetilde{Z}_s^y}{|Z_s^x-\widetilde{Z}_s^y|^2} ( Z_s^x-\widetilde{Z}_s^y, \,dW_s ).\label{eq:mir}
\end{align}
\item For any $t \ge \inf\{s>0 \mid Z_s^x=\widetilde{Z}_s^y\}$, we have $Z_t^x=\widetilde{Z}_t^y$.
\end{itemize} 
\begin{remark}\label{rem:cp}
\begin{enumerate}
\item[(1)] The mirror coupling $(Z^x,\widetilde{Z}^y)$ is a special case of couplings for diffusion processes studied in \cite[Section~3]{LR}. 
\item[(2)] For $x,y \in \R^d$ with $x \neq y$ and  $t<\inf\{s>0 \mid Z_s^x=\widetilde{Z}_s^y\}$, we have
\[
Z_t^{x}-\widetilde{Z}_{t}^{y}=x-y+2\int_{0}^{t} \frac{Z_s^x-\widetilde{Z}_s^y}{|Z_s^x-\widetilde{Z}_s^y|^2} ( Z_s^x-\widetilde{Z}_s^y, \,dW_s ).
\]
This implies that the random vector $Z_{t}^{x}-Z_{t}^{y}$ is parallel to $x-y$. We then see from \eqref{eq:mir} that $\widetilde{Z}_t^y$ coincides with the mirror image of $Z_{t}^{x}$ with respect to the hyperplane $H_{x,y}=\{z \in \mathbb{R}^d \mid (z-(x+y)/2,x-y)=0\}$. Further, $\inf\{s>0 \mid Z_s^x=\widetilde{Z}_s^y\}=\inf\{s>0 \mid Z_s^x \in H_{x,y}\}$. Then, it is easy to see that $(Z^x,\widetilde{Z}^y)$ is a strong Markov process on $\R^d \times \R^d.$
\end{enumerate}
\end{remark}
For $x,y \in \R^d$, we define $\xi_{x,y}=\inf\{t>0 \mid Z_t^x=Z_t^y\}.$ We denote by $P_{x,y}$ the distribution of $(Z^x,\widetilde{Z}^y)$. For $t \ge 0$, we set 
\begin{align*}
A_{t}^{\mu,x}&=A_{t}^{\mu}(Z^x),\quad \widetilde{A}_{t}^{\mu,y}=A_{t}^{\mu}(\widetilde{Z}^y ),
\end{align*}
where we regard  $A_t^\mu$ as $[0,\infty]$-valued functions on $C([0,\infty),\R^d)$, the space of $\R^d$-valued continuous functions on $[0,\infty)$. Then, $\{A_{t}^{\mu,x}\}_{t \ge 0}$ and $\{\widetilde{A}_{t}^{\mu,y}\}_{t \ge 0}$
become PCAFs of $Z^x$ and $\widetilde{Z}^y$, respectively. Furthermore, $A^{\mu,x}$ and $\widetilde{A}^{\mu,y}$ can be regarded as PCAFs of the coupled process $(Z^x,\widetilde{Z}^y)$ in the natural way.

For $x,y \in \R^d$, we define 
\begin{align}\label{eq:eqci}
\mathcal{I}_{x,y}=E_{x,y}\bigl[ A_{\xi_{x,y}}^{\mu,x} \wg 1 \bigr], \quad
\widetilde{\mathcal{I}}_{x,y}=E_{x,y}\bigl[ \widetilde{A}_{\xi_{x,y}}^{\mu,y} \wg 1 \bigr],
\end{align}
where $E_{x,y}$ denotes the expectation under $P_{x,y}$. 
At the end of this section (see \eqref{eq:rsf4} below), we will show that for any  $f \in \mathcal{B}_b^{\ast}(\R^d)$ with $\|f\|_{\infty}\le 1$,
\begin{align*}
|V_{\alpha}^\mu f(x)-V_{\alpha}^\mu f(y)| \le 2(1+\alpha^{-1}) (\mathcal{I}_{x,y}+\widetilde{\mathcal{I}}_{x,y}),\quad \alpha>0,\, x,y \in \R^d.
\end{align*}
We now introduce some lemmas to estimate the expectations in \eqref{eq:eqci}.

\begin{lemma}\label{lem:key}
Let $x,y \in \R^d$, and $\tau$ be a stopping time of $(Z^x,\widetilde{Z}^y)$. Then,
\begin{align*}
E_{x,y}\left[\left| Z_{ \xi_{x,y} \wg \tau }^{x} -\widetilde{Z}_{\xi_{x,y} \wg \tau}^{y} \right|^\theta \right] \le |x-y|^{\theta} 
\end{align*}
for any $\theta \in (0,1]$.
\end{lemma}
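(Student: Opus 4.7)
The plan is to collapse the vector-valued difference $D_t := Z_t^x - \widetilde{Z}_t^y$ to a one-dimensional non-negative continuous martingale, after which the bound will follow from the concavity of $y \mapsto y^\theta$. Assuming $x \neq y$ (the case $x = y$ is trivial), Remark~\ref{rem:cp}(2) says that for every $t < \xi_{x,y}$ the vector $D_t$ is non-zero and parallel to $x-y$; by continuity the unit vector $D_t/|D_t|$ is therefore the constant $u := (x-y)/|x-y|$ on $[0,\xi_{x,y})$, so we may write $D_t = X_t\,u$ with $X_0 = |x-y|$ and $X_t > 0$ on that interval.

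Substituting $D_s = X_s u$ into the Itô integral defining $\widetilde{Z}_t^y$ in \eqref{eq:mir}, the integrand simplifies to $\frac{D_s}{|D_s|^2}(D_s, dW_s) = u\,(u, dW_s)$, whence
$$D_t = (x-y) + 2 u \int_0^t (u, dW_s) = (|x-y| + 2\beta_t)\,u,\qquad t < \xi_{x,y},$$
where $\beta_t := (u, W_t)$ is a standard one-dimensional Brownian motion by Lévy's characterization. Hence $X_t = |x-y| + 2\beta_t$ on $[0,\xi_{x,y}]$, and the stopped process $M_t := X_{t \wg \xi_{x,y}}$ is a non-negative continuous martingale with $M_0 = |x-y|$; in particular $|D_{t \wg \xi_{x,y}}| = M_t$ for every $t \ge 0$.

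Since $y \mapsto y^\theta$ is concave on $[0,\infty)$ for $\theta \in (0,1]$, conditional Jensen's inequality makes $\{M_t^\theta\}$ a non-negative supermartingale (it is integrable because $M_t^\theta \le M_t + 1$). Applying optional stopping to the bounded stopping times $\tau \wg n$ and passing to the limit via Fatou's lemma yields $E_{x,y}[M_\tau^\theta] \le M_0^\theta = |x-y|^\theta$, which is exactly the claim since $M_\tau = M_{\tau \wg \xi_{x,y}} = |D_{\tau \wg \xi_{x,y}}|$. The only non-routine step is the first one: recognizing, via the geometric content of Remark~\ref{rem:cp}(2), that the multi-dimensional Itô integral collapses to a single one-dimensional Brownian motion along the fixed direction $u$. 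Once that reduction is in place, the remainder is a standard application of optional stopping to a non-negative supermartingale.
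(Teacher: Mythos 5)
Your proof is correct, and it takes a genuinely different route from the paper's. The paper applies It\^{o}'s formula directly to the $d$-dimensional quantity $|Z^x_t-\widetilde{Z}^y_t|^\theta$, stopped at $\xi_n=\inf\{s:|Z^x_s-\widetilde{Z}^y_s|\le 1/n\}$ to stay away from the origin where $|\cdot|^\theta$ is not $C^2$, then checks by direct computation that the drift coefficient $\theta(\theta-1)/2$ (up to harmless constant factors) is non-positive for $\theta\in(0,1]$, takes expectations, and lets $n\to\infty$. You instead exploit the geometry of the mirror coupling up front: since the difference $D_t$ stays on the fixed line through $x-y$ until coupling, you collapse the problem to the one-dimensional non-negative martingale $M_t=X_{t\wg\xi}$ with $X_t=|x-y|+2(u,W_t)$, and then invoke concavity (Jensen) to get the supermartingale $M^\theta$ and finish with optional stopping plus Fatou. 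This avoids the local-time-free It\^{o} workaround at the origin and makes the supermartingale structure transparent; the paper's route is more computational but does not need to articulate the parallelism/one-dimensional reduction inside this lemma (it does invoke it implicitly in the next lemma when applying Dambis--Dubins--Schwartz). Both proofs ultimately rest on the same underlying fact, namely that $|D_{t\wg\xi}|$ is a non-negative continuous martingale, and both require a passage to the limit (your $\tau\wg n$ plus Fatou, versus the paper's $\xi_n\uparrow\xi$ and $t\to\infty$). One minor remark: when you state that $D_t/|D_t|$ is constantly equal to $u$ on $[0,\xi)$, it is worth spelling out that parallelism gives $D_t/|D_t|\in\{\pm u\}$, and continuity together with $D_0/|D_0|=u$ then forces the value $u$; you gesture at this but the reader should not have to reconstruct it.
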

\begin{proof}
We fix $t \ge0$ and  $x,y \in \R^d$  with $x \neq y$. To simplify the notation, we write $Z_t$ (resp. $\widetilde{Z}_t$, $L_t$, $\widetilde{L}_t$, $\xi$) for $Z_t^x$  (resp. $Z_t^y$, $L_t^x$, $\widetilde{L}_t^y$, $\xi_{x,y}$).  We also fix $n \in \N$ such that $|x-y| \ge 1/n$, and set $\xi_n=\inf\{s>0 \mid |Z_s-\widetilde{Z}_s| \le 1/n\}$. 

For $s<\xi_n \wg \tau$, we define
\begin{align*}
\alpha_s=(Z_s-\widetilde{Z}_s)(Z_s-\widetilde{Z}_s)^{T}/|Z_s-\widetilde{Z}_s|^2.
\end{align*}
Here, $(Z_s-\widetilde{Z}_s)^{T}$ denotes the transpose of $Z_s-\widetilde{Z}_s.$ From  It\^{o} formula, 
\begin{align*}
&|Z_{t \wg  \xi_n \wg \tau}-\widetilde{Z}_{t \wg  \xi_n \wg \tau}|^2-|x-y|^2=2\int_{0}^{t \wg  \xi_n \wg \tau}(Z_s-\widetilde{Z}_s, \alpha_s\,dW_s )+t \wg  \xi_n \wg \tau
\end{align*}
and for any $\theta \in (0,1]$,
\begin{align} \label{eq:cp1}
&|Z_{t \wg  \xi_n \wg \tau}-\widetilde{Z}_{t \wg  \xi_n \wg \tau}|^{\theta}-|x-y|^{\theta} \notag\\
&=\theta \int_{0}^{t \wg  \xi_n \wg \tau} |Z_s-\widetilde{Z}_s|^{\theta-2}(Z_s-\widetilde{Z}_s, \alpha_s\,dW_s )\notag \\
&\quad+\{\theta/2+\theta\left(\theta/2-1 \right)\}\int_{0}^{t \wg  \xi_n \wg \tau} |Z_s-\widetilde{Z}_s|^{\theta-2}\,ds.
\end{align}
Since the first term above is a martingale and the second one is non-positive, by taking the expectations of both sides of \eqref{eq:cp1}, we arrive at 
\begin{align}\label{eq:cp2}
E_{x,y}\left[\left|Z_{t \wg  \xi_n \wg \tau}-\widetilde{Z}_{t \wg  \xi_n \wg \tau}\right|^{\theta}\right] \le |x-y|^{\theta}.
\end{align}
Letting $n \to \infty$ in \eqref{eq:cp2}, we complete the proof.\qed
\end{proof}

\begin{lemma}\label{lem:cp}
It holds that
$$P_{x,y}(t<\xi_{x,y}) \le |x-y|/\sqrt{2\pi t}$$
for any $t>0$ and $ x,y \in \R^d$.
\end{lemma}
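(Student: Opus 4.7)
\medskip
\noindent\textbf{Proof plan.}
The case $x=y$ is trivial since then $\xi_{x,y}=0$ so the left-hand side vanishes. Assume $x\neq y$. The plan is to reduce the statement, which is a $d$-dimensional estimate, to a one-dimensional first passage problem via the geometric description of the mirror coupling given in Remark~\ref{rem:cp}~(2). Recall from there that
\[
\xi_{x,y}=\inf\{s>0\mid Z^x_s\in H_{x,y}\},
\qquad H_{x,y}=\{z\in\R^d\mid (z-(x+y)/2,\,x-y)=0\},
\]
so $\xi_{x,y}$ is simply the first hitting time of the affine hyperplane $H_{x,y}$ by the Brownian motion $Z^x=x+W$. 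Because $H_{x,y}$ is orthogonal to $x-y$ and passes through the midpoint $(x+y)/2$, its signed distance from $x$ in the direction $e:=(x-y)/|x-y|$ equals $-|x-y|/2$.

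Next I would project onto the unit vector $e$. The real-valued process $\beta_t:=(W_t,e)$ is a one-dimensional standard Brownian motion starting at $0$, and a direct computation gives
\[
(Z^x_t-(x+y)/2,\,e)=\beta_t+|x-y|/2,
\]
so $Z^x_t\in H_{x,y}$ if and only if $\beta_t=-|x-y|/2$. Writing $a:=|x-y|/2$, this identifies
\[
\xi_{x,y}=\inf\{s>0\mid \beta_s=-a\}=:T_a,
\]
which has the classical first-passage distribution.

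With this identification the proof is a routine application of the reflection principle. By symmetry $T_a$ has the same law as $\inf\{s>0\mid \beta_s=a\}$, and the reflection principle gives $P(T_a\le t)=2P(\beta_t\ge a)$, hence
\[
P_{x,y}(t<\xi_{x,y})=P(T_a>t)=P(|\beta_t|<a)=\int_{-a}^{a}\frac{1}{\sqrt{2\pi t}}\,e^{-u^2/(2t)}\,du.
\]
Bounding the Gaussian density by its maximum $1/\sqrt{2\pi t}$ yields $P_{x,y}(t<\xi_{x,y})\le 2a/\sqrt{2\pi t}=|x-y|/\sqrt{2\pi t}$, as required.

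I do not expect any genuine obstacle here: the only point requiring care is the bookkeeping that the projection $\beta$ really is a one-dimensional Brownian motion and that $\xi_{x,y}$ really equals its first passage time to $-a$, both of which are immediate from Remark~\ref{rem:cp}~(2) and the definition of $Z^x$.
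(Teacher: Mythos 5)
Your proof is correct, and it takes a genuinely different (and cleaner) route than the paper's. The paper proves the same lemma by applying It\^o's formula to the radial process $|Z^x-\widetilde{Z}^y|$ (specializing its equation (eq:cp1) to $\theta=1$), invoking the Dambis--Dubins--Schwartz theorem to identify the resulting continuous martingale as a time-changed one-dimensional Brownian motion, and then applying the reflection principle; it also works with the approximating stopping times $\xi_n$ and passes to the limit $n\to\infty$. You instead exploit the geometric fact recorded in Remark~\ref{rem:cp}~(2) -- that $\xi_{x,y}$ is the first hitting time of the hyperplane $H_{x,y}$ by the Brownian motion $Z^x=x+W$ -- and project onto the unit normal $e=(x-y)/|x-y|$, so that $\xi_{x,y}$ is manifestly the first passage time of the one-dimensional Brownian motion $\beta_t=(W_t,e)$ to level $-|x-y|/2$. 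This avoids stochastic calculus, DDS, and the $\xi_n\to\xi$ approximation entirely.

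Two further remarks. First, your argument makes transparent that the barrier is at distance $|x-y|/2$, and the final bound $\int_{-a}^{a}\tfrac{1}{\sqrt{2\pi t}}e^{-u^2/(2t)}\,du\le 2a/\sqrt{2\pi t}$ with $a=|x-y|/2$ yields exactly $|x-y|/\sqrt{2\pi t}$. The paper's proof, as written, places the barrier at $-|x-y|$ (because the stated It\^o formula (eq:cp1) appears to drop a factor of $2$ in the martingale term: with $dY_s=2v_s(v_s,dW_s)$ for $Y=Z^x-\widetilde Z^y$ one gets $d|Y|=2(v_s,dW_s)$, not $(v_s,dW_s)$), and then compensates by an over-tight last inequality $\int_{-|x-y|}^{|x-y|}\tfrac{1}{\sqrt{2\pi t}}e^{-u^2/(2t)}\,du\le |x-y|/\sqrt{2\pi t}$, which on its own is off by a factor $2$. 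The two slips cancel, and the stated lemma is correct -- your derivation confirms this without any cancellation of errors. Second, your reduction only uses that $e$ is a unit vector and that $Z^x$ is a standard Brownian motion, so it is, if anything, more elementary and more robust.
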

\begin{proof}
Let $t \ge0$ and  $x,y \in \R^d$  with $x \neq y$. We take $n \in \N$ such that $|x-y| \ge 1/n$.
Letting $\theta=1$ in \eqref{eq:cp1}, we have 
\begin{align}\label{eq:cp6}
&|Z_{t \wg  \xi_n }-\widetilde{Z}_{t \wg  \xi_n}|-|x-y| =\int_{0}^{t \wg  \xi_n} |Z_s-\widetilde{Z}_s|^{-1}(Z_s-\widetilde{Z}_s, \alpha_s\,dW_s ).
\end{align}
The quadratic variation of the right-hand side of \eqref{eq:cp6} equals to $t \wg \xi_n$, $t \ge 0$. Hence, by the Dambis--Dubins--Schwartz theorem, there is a one-dimensional Brownian motion $\beta=\{\beta_s\}_{s \ge 0}$ such that 
\begin{align}\label{eq:cp7}
\beta_{t \wg \xi_n}= \int_{0}^{t \wg  \xi_n} |Z_s-\widetilde{Z}_s|^{-1}(Z_s-\widetilde{Z}_s, \alpha_s\,dW_s ).
\end{align}
By using \eqref{eq:cp6}, \eqref{eq:cp7}, and the reflection principle of the Brownian motion, we have
\begin{align}\label{eq:cp8}
P_{x,y}(\xi_n>t) &\le P_{x,y} \left( -|x-y| \le \inf_{0 \le s \le t} \beta_s \right)  \notag \\
&=1-2\int_{-\infty}^{-|x-y|} \frac{1}{\sqrt{2 \pi t}}\exp(-u^2/2t)\,du \notag  \\
&=\int_{-|x-y|}^{|x-y|}\frac{1}{\sqrt{2 \pi t}}\exp(-u^2/2t)\,du \le |x-y|/\sqrt{2\pi t}. 
\end{align}
Letting $n \to \infty$ in \eqref{eq:cp8} completes the proof. \qed
\end{proof}

For $x,y \in \R^d$ and an open subset $U \subset \R^d$, we define
\begin{align*}
\tau_{U}^x&=\tau_{U}(Z^x),\quad \widetilde{\tau}_{U}^y=\tau_{U}(\widetilde{Z}^y)
\end{align*}
where we regard  $\tau_{U}$ as $[0,\infty]$-valued function on $C([0,\infty),\R^d)$. We note that $\tau_U^x=\inf\{t>0 \mid (Z_t^x,\widetilde{Z}_t^y) \notin U \times \R^d\}$. Hence, $\tau_U^x$ and $\widetilde{\tau}_U^y$ are exit times of  $(Z^x,\widetilde{Z}^y)$. We also see from \cite[Lemma~II.1.2]{S} that there exists $C>0$ depending on $d$ such that 
\begin{align}\label{ref:exit}
P_{x,y}(\tau_{B(x,r)}^x \le t) \le C
\exp(-r^2/Ct)
\end{align}
for any $(x,y,t) \in \R^d \times \R^d \times (0,\infty)$ and $r \in (0,\infty)$.
\begin{lemma}\label{lem:excp}
Let $\chi,\eps \in (0,1]$, $R>0$ and  $n \ge 1$ be positive numbers. Then, there is a positive constant $C$ depending on $\eps$, $R$, and $n$ such that 
\begin{align*}
&P_{x,y}(\tau_{B(x,2^{-n}R|x-y|^{\chi}) }^x  \le \xi_{x,y})  \le C|x-y|^{1-\chi-\eps}
\end{align*}
for any $x,y \in \R^d$ with $|x-y| \in (0,1]$.
\end{lemma}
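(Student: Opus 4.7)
The plan is to split the event $\{\tau_{B(x,r)}^{x} \le \xi_{x,y}\}$ at an intermediate deterministic time $t$ and apply the two tools already available: the coupling-time tail bound of Lemma~\ref{lem:cp} and the exit-time bound \eqref{ref:exit}. Writing $r = 2^{-n}R\,|x-y|^\chi$, I would first observe the elementary inclusion
\begin{equation*}
\{\tau_{B(x,r)}^{x} \le \xi_{x,y}\} \subset \{\tau_{B(x,r)}^{x} \le t\} \cup \{\xi_{x,y} > t\},
\end{equation*}
valid for every $t>0$, because on the complement of $\{\tau_{B(x,r)}^{x}\le t\}$ the first exit time exceeds $t$, which forces $\xi_{x,y}\ge \tau_{B(x,r)}^{x}>t$.

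Next I would pick $t = |x-y|^{2\chi+2\eps}$. Lemma~\ref{lem:cp} immediately gives
\begin{equation*}
P_{x,y}(\xi_{x,y}>t) \le \frac{|x-y|}{\sqrt{2\pi t}} = \frac{1}{\sqrt{2\pi}}\,|x-y|^{1-\chi-\eps},
\end{equation*}
which is of the desired order. For the other term I would use \eqref{ref:exit} with this $t$: since $r^2/t = (2^{-n}R)^2\,|x-y|^{-2\eps}$, one obtains
\begin{equation*}
P_{x,y}(\tau_{B(x,r)}^{x} \le t) \le C_0\exp\!\bigl(-c_1\,|x-y|^{-2\eps}\bigr),
\end{equation*}
with $c_1 = (2^{-n}R)^2/C_0$ and $C_0$ depending only on $d$. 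The point is that because of the negative exponent on $|x-y|$ inside the exponential, this term decays super-polynomially as $|x-y|\to 0$.

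The final step is the routine comparison $\exp(-c_1 u^{-2\eps}) \le C'' u^{1-\chi-\eps}$ for $u\in(0,1]$. If $1-\chi-\eps\le 0$, the right-hand side is at least $1$ and the bound is trivial; otherwise, the function $u\mapsto u^{\chi+\eps-1}\exp(-c_1 u^{-2\eps})$ is continuous on $(0,1]$, tends to $0$ as $u\downarrow 0$, so it is bounded on $(0,1]$ by a constant depending only on $\eps$, $R$, and $n$. Adding the two estimates yields the claim. There is no real conceptual obstacle here; the only point requiring a little care is the choice of $t$, which must be small enough that $r^2/t\to\infty$ as $|x-y|\to 0$ (so that the exit-time term is negligible) yet large enough that the coupling-time term matches the target exponent $1-\chi-\eps$. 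The choice $t = |x-y|^{2\chi+2\eps}$ achieves both.
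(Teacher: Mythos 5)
Your argument is correct and follows the paper's proof essentially verbatim: the same splitting of the event at a deterministic time $t$, the same choice $t=|x-y|^{2\chi+2\eps}$, the same appeal to \eqref{ref:exit} and Lemma~\ref{lem:cp}, and the same elementary comparison to absorb the super-polynomially small exponential term. No material differences.
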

\begin{proof}
We fix $\chi,\eps \in (0,1]$, $R>0$, $n \ge 1$. Let $x,y \in \R^d$ with $|x-y| \in (0,1]$. By \eqref{ref:exit} and Lemma~\ref{lem:cp}, there exists $C>0$ such that for any $t>0$,
\begin{align}
&P_{x,y}(\tau_{B(x,2^{-n}R|x-y|^{\chi}) }^x \le \xi_{x,y}) \notag \\
&\le P_{x,y}(\tau_{B(x,2^{-n}R|x-y|^{\chi})}^x \le t)+P_{x,y}(\xi_{x,y}>t) \notag \\
&\le C
\exp(-2^{-2n}R^2|x-y|^{2\chi}/Ct)+|x-y|/\sqrt{2\pi t}. \label{eq:excp3}
\end{align}
Then, letting $t=|x-y|^{2\chi+2\eps} (\in (0,1])$ in \eqref{eq:excp3}, we have
\begin{align}
&P_{x,y}(\tau_{B(x,2^{-n}R|x-y|^{\chi}) }^x \le \xi_{x,y}) \notag \\
&\le C
\exp(-2^{-2n}R^2|x-y|^{-2\eps}/C)+|x-y|^{1-\chi-\eps}.\label{eq:excp4} 
\end{align}
For any $\dl \in (0,1]$ and $c \in (0,\infty)$ there exists $c_{\dl} \in (0,\infty)$ depending on $\dl$ and $c$ such that for any $r \in [0,\infty)$,
\begin{align}
\exp(-cr^{-\dl}) \le c_{\dl}r \label{eq:m2}.
\end{align}
By using \eqref{eq:excp4} and \eqref{eq:m2}, we obtain the desired inequality. \qed
\end{proof}

From now on, we fix $p \in \R^d$, and take constants $\kappa>d-2$, $R \in (0,1]$, and $K>0$ so that  \eqref{eq:ballm} holds. Theorem~\ref{thm:rsf} is proved by an inductive argument. The following lemma is the first step. 

\begin{lemma}\label{lem:ind1}
Let $\eps \in (0, (2-d+\kappa)/(3-d+\kappa))$. There exists $C>0$ depending on $d$, $\eps$, $p$, $\kappa$, $R$, and $K$  such that
\begin{align*}
 \mathcal{I}_{x,y} &\le C|x-y|^{(2-d+\kappa)/(3-d+\kappa)-\eps}
 \end{align*}
  for any $x,y \in  B(p, R/2)$.

\end{lemma}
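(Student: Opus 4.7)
The plan is to decompose $\mathcal{I}_{x,y}$ according to whether the coupling time $\xi_{x,y}$ arrives before or after the first exit of $Z^x$ from a small ball $B(x,r)$, with $r = 2^{-n} R |x-y|^{\chi}$ and $\chi = 1/(3-d+\kappa) \in (0,1]$. This value of $\chi$ is forced by equating the two contributions in $|x-y|$: one checks that $\chi(2-d+\kappa) = 1-\chi = (2-d+\kappa)/(3-d+\kappa)$.

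On the event $\{\xi_{x,y} \le \tau_{B(x,r)}^{x}\}$, monotonicity of the PCAF gives $A_{\xi_{x,y}}^{\mu,x} \wg 1 \le A_{\tau_{B(x,r)}^{x}}^{\mu,x}$; since the first marginal of $P_{x,y}$ coincides with $P_x$ on paths of $Z^x$, the corresponding expectation reduces to $E_x[A_{\tau_{B(x,r)}}^{\mu}]$, which is bounded by $C\zeta_d(r,\kappa)$ via Lemma~\ref{lem:pcaf}. Substituting $r = 2^{-n}R|x-y|^{\chi}$ yields a bound of order $|x-y|^{(2-d+\kappa)/(3-d+\kappa)}$ for $d \neq 2$, with an extra $-\log|x-y|$ factor in dimension $2$ that can be absorbed into the arbitrarily small loss $\eps$. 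On the complementary event I estimate $A^{\mu,x}_{\xi_{x,y}} \wg 1 \le 1$ and apply Lemma~\ref{lem:excp} with a small auxiliary parameter $\eps'$, obtaining
\[
P_{x,y}(\tau_{B(x,r)}^{x} \le \xi_{x,y}) \le C |x-y|^{1-\chi-\eps'} = C |x-y|^{(2-d+\kappa)/(3-d+\kappa) - \eps'}.
\]
Selecting $\eps'$ suitably small in terms of $\eps$, summing the two contributions, and using the trivial bound $\mathcal{I}_{x,y} \le 1$ when $|x-y|$ is bounded away from $0$ gives the claimed estimate on all of $B(p,R/2)$.

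The main obstacle is verifying that Lemma~\ref{lem:pcaf} can actually be invoked with center $x$ and the chosen small radius $r$: since a general $x \in B(p,R/2)$ may satisfy $|x-p| > r$, one must either restrict attention to $|x-y|$ large enough that $r$ exceeds $|x-p|$ (so the inclusion $x \in B(p,r)$ required by Lemma~\ref{lem:pcaf} holds), or extract the Green-measure bound via a covering/inclusion reduction to a ball centered near $p$. A secondary technical point is the logarithm appearing in $\zeta_d(r,\kappa)$ when $d=2$, which is handled uniformly by shaving a further arbitrarily small amount off the H\"older exponent. The rigidity of $\chi = 1/(3-d+\kappa)$ simply reflects that any other choice degrades one of the two pieces of the decomposition.
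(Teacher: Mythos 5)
Your argument is, modulo presentation, exactly the paper's proof of Lemma~\ref{lem:ind1}: the same split
\[
\mathcal{I}_{x,y} \le E_{x,y}\bigl[A^{\mu,x}_{\tau^x_{B(x,r)}}\bigr] + P_{x,y}\bigl(\tau^x_{B(x,r)} \le \xi_{x,y}\bigr),
\]
the same scaling $r\asymp|x-y|^{\chi}$ with $\chi=1/(3-d+\kappa)$ chosen so that $\chi(2-d+\kappa)=1-\chi$, Lemma~\ref{lem:pcaf} for the first term, Lemma~\ref{lem:excp} for the second, and absorption of the $d=2$ logarithm into the $\eps$-loss (the paper does this explicitly via the elementary inequality $-s^{a}\log s\le b^{-1}s^{a-b}$, which is the same shaving you describe).

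The concern you raise about invoking Lemma~\ref{lem:pcaf} is legitimate, but it is not a defect of your proof relative to the paper's: the paper's own proof applies Lemma~\ref{lem:pcaf} with center $x\in B(p,R/2)$ and radius $r=R|x-y|^{\chi}/2$, which need not satisfy the stated requirement $x\in B(p,r)$ once $|x-y|$ is small; moreover the dyadic-annulus estimate inside the proof of Lemma~\ref{lem:pcaf} itself already bounds $\mu(B(x,r2^{-(k-1)}))$ by $K(r2^{-(k-1)})^{\kappa}$ for every $k$, which \eqref{eq:ballm} as literally written does not yield once $r2^{-(k-1)}<|x-p|$. Condition \eqref{eq:ballm} is evidently meant to be read as a uniform local bound, i.e.\ $\mu(B(x,s))\le Ks^{\kappa}$ for all centers $x$ in a fixed neighborhood of $p$ and all $s\le R$, and under that reading both Lemma~\ref{lem:pcaf} and your use of it are fine. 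Of the two patches you sketch, neither is sufficient on its own: restricting to $|x-y|$ bounded away from $0$ discards precisely the regime where the H\"older estimate has content, and a covering reduction runs into the fact that the literal form of \eqref{eq:ballm} says nothing useful about balls far from $p$. Since the paper takes the stronger reading tacitly, there is nothing for you to repair here, and your proposal should be regarded as matching the paper's proof.
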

\begin{proof}
Let $\eps \in (0, (2-d+\kappa)/(3-d+\kappa))$.  We fix $x,y \in B(p, R /2)$ and set
\[r=R|x-y|^\chi/2,\]
where $\chi \in (0,1]$ is a positive number which will be chosen later. Because $|x-y| \le 1$, we have $r \le R/2$.
A straightforward calculation gives
\begin{align}\label{eq:func1}
\mathcal{I}_{x,y}&\le E_{x,y}\left[A_{\tau_{B(x,r)}^x}^{\mu,x}  \right]+ P_{x,y}(\tau_{ B(x,r)}^x \le \xi_{x,y}) .
\end{align}
By applying Lemmas~\ref{lem:pcaf} and \ref{lem:excp} to \eqref{eq:func1}, we obtain that 
\begin{align}\label{eq:func3}
\mathcal{I}_{x,y}&\le C\{ \zeta_d(r,\kappa)+|x-y|^{1-\chi-\eps}\}.
\end{align}
Here, $C>0$  is a positive constant depending on $d$, $\eps$, $\chi$, $p$, $\kappa$, $R$,  and $K$.

Next, we  optimize the right-hand side of \eqref{eq:func3} in $\chi$. 
Note that we have for any $a,b >0$ with $b\le a$, 
\begin{align}\label{eq:log}
-s^a \log s \le (1/b)s^{a-b},\quad s \in (0,1].
\end{align}
Thus, if $d=2$, we have
\begin{align*}
\zeta_d(r,\kappa) & \le \frac{\chi}{\eps} \left(\frac{R}{2}\right)^{\kappa-(\eps/\chi)}  |x-y|^{\kappa \chi-\eps}
\end{align*}
provided that  $\eps /\chi \le \kappa$. Let $\chi$ be the solution to $\kappa \chi-\eps=1-\chi-\eps$. Then, $\chi=1/(\kappa+1)$. Further, $\eps/\chi \le \kappa$ and  
\begin{align*}
\mathcal{I}_{x,y}&\le C'|x-y|^{\frac{\kappa}{\kappa+1}-\eps},
\end{align*}
where $C'$ is a positive constant depending on $\eps$, $\chi$, $p$, $\kappa$, $R$,  and $K$. 

If $d \ge 3$ or $d=1$,  we have 
\begin{align*}
\zeta_d(r,\kappa) &\le r^{2-d+\kappa-(\eps/\chi)}=\left(\frac{R|x-y|^\chi}{2}\right)^{2-d+\kappa-(\eps/\chi)} \\
&=\left(\frac{R}{2}\right)^{2-d+\kappa-(\eps/\chi)}  |x-y|^{\chi(2-d+\kappa)-\eps}.
\end{align*}
Let $\chi$ be the solution to $\chi(2-d+\kappa)-\eps=1-\chi-\eps$. Then, $\chi=1/(3-d+\kappa) \in (0,1]$ and 
\begin{align*}
\mathcal{I}_{x,y}&\le C''|x-y|^{(2-d+\kappa)/(3-d+\kappa)-\eps}.
\end{align*}
Here, $C''$ is a positive constant depending on $d$, $\eps$, $\chi$, $p$, $\kappa$, $R$,  and $K$. 
\qed
\end{proof}

For $\eps \in (0,1)$ and $n \in \N$, we set 
\begin{align*}
q_{n,\kappa, \eps}=\frac{r_n-\eps r_{n-1}}{r_n+1}-\eps,
\end{align*}
where $r_n$ is a positive number defined by 
\begin{align*}
r_n=(2-d+\kappa)  (r_{n-1}+1), \quad r_0=0.
\end{align*}
We then find that $r_n=\sum_{l=1}^n (2-d+\kappa)^l$ and $\{r_n\}_{n=1}^\infty$ is increasing.
For any $n \in \N$, 
\begin{align} 
q_{n,\kappa,\eps}>0 &\Longleftrightarrow \frac{r_n}{r_n+r_{n-1}+1}>\eps \notag \\
&\Longleftrightarrow \frac{(2-d+\kappa) r_n}{(2-d+\kappa) r_n+(2-d+\kappa)(r_{n-1}+1)}>\eps \notag \\
&\Longleftrightarrow \frac{2-d+\kappa}{3-d+\kappa}>\eps \label{eq:rel2}.
\end{align}

\begin{lemma}\label{lem:ind}
Let $n \in \N$ and $\eps \in (0, (2-d+\kappa)/(3-d+\kappa))$. Then, there exists a positive constant  $C_1$ depending on $d$, $\eps$, $p$, $\kappa$, $R$, $K$, and $n$ such that 
\begin{align}
 \mathcal{I}_{x,y} &\le C_1|x-y|^{q_{n,\kappa, \eps}} \label{eq:ind0}
\end{align}
for any $x,y \in B(p,2^{-n}R)$. 
\end{lemma}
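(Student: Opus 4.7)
The argument is by induction on $n$, with the base case $n=1$ supplied by Lemma~\ref{lem:ind1} (note that $q_{1,\kappa,\eps} = r_1/(r_1+1)-\eps = (2-d+\kappa)/(3-d+\kappa)-\eps$). For the inductive step, I would fix $x,y \in B(p, 2^{-(n+1)}R)$ with $x \ne y$, set $r = 2^{-(n+1)}R|x-y|^{\chi_n}$ for a scale parameter $\chi_n \in (0,1]$ to be chosen, and let $\tau = \tau^x_{B(x,r)}$. Since $|x-y| \le 2 \cdot 2^{-(n+1)}R \le 1$, one has $r \le 2^{-(n+1)}R$, and so $B(x,r) \subset B(p, 2^{-n}R)$. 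The elementary inequality $(a+b) \wg 1 \le a + (b \wg 1)$ applied at the stopping time $\tau$ yields
\[
\mathcal{I}_{x,y} \le E_{x,y}\bigl[A^{\mu,x}_{\tau}\bigr] + E_{x,y}\bigl[\bone_{\{\tau < \xi_{x,y}\}}((A^{\mu,x}_{\xi_{x,y}} - A^{\mu,x}_\tau)\wg 1)\bigr].
\]

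Lemma~\ref{lem:pcaf} controls the first term by $C\zt_d(r,\kappa)$. For the second, the strong Markov property of the coupling at $\tau$ rewrites it as $E_{x,y}[\bone_{\{\tau < \xi_{x,y}\}} \mathcal{I}_{Z^x_\tau, \widetilde{Z}^y_\tau}]$. By the mirror structure in Remark~\ref{rem:cp}(2), $\widetilde{Z}^y_\tau$ is the reflection of $Z^x_\tau$ across the perpendicular bisector of $x$ and $y$, so $|\widetilde{Z}^y_\tau - y| = |Z^x_\tau - x| \le r$, placing both $Z^x_\tau$ and $\widetilde{Z}^y_\tau$ in $\cl{B}(p, 2^{-n}R)$ on $\{\tau < \xi_{x,y}\}$. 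Invoking the inductive hypothesis gives $\mathcal{I}_{Z^x_\tau, \widetilde{Z}^y_\tau} \le C_n|Z^x_\tau - \widetilde{Z}^y_\tau|^{q_{n,\kappa,\eps}}$. I would then combine H\"older's inequality with conjugate exponents $1/(1-q_n)$ and $1/q_n$, Lemma~\ref{lem:key} at $\theta = 1$ (using that $|Z^x_\tau - \widetilde{Z}^y_\tau| = |Z^x_{\tau\wg\xi_{x,y}} - \widetilde{Z}^y_{\tau\wg\xi_{x,y}}|$, since the two processes coincide after $\xi_{x,y}$), and Lemma~\ref{lem:excp} for $P_{x,y}(\tau < \xi_{x,y})$, to bound
\[
E_{x,y}\bigl[\bone_{\{\tau < \xi_{x,y}\}}|Z^x_\tau - \widetilde{Z}^y_\tau|^{q_n}\bigr] \le C|x-y|^{(1-q_n)(1-\chi_n-\eps') + q_n}
\]
for a suitable auxiliary parameter $\eps' > 0$.

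To finish, write $\zt_d(r,\kappa) \le C|x-y|^{\chi_n(2-d+\kappa)-\delta}$ (with $\delta = 0$ for $d \ne 2$ and $\delta > 0$ arbitrarily small for $d=2$, via $-s^\kappa \log s \le C_\delta s^{\kappa - \delta/\chi_n}$), equate the two exponents in $|x-y|$, and solve for $\chi_n$; then absorb the small losses $\delta$ and $\eps'$ into $\eps$. The principal obstacle is algebraic: one must check that this optimization, combined with the recursion $r_{n+1} = (2-d+\kappa)(r_n+1)$ and the explicit form of $q_{n,\kappa,\eps}$, reproduces the exponent $q_{n+1,\kappa,\eps}$ exactly (positivity at each step is already guaranteed by the standing assumption on $\eps$ via \eqref{eq:rel2}). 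This is the mechanism that drives $q_n$ toward $\{(2-d+\kappa)\wg 1\}-\eps$ as $n \to \infty$, which is the H\"older exponent that ultimately appears in Theorem~\ref{thm:rsf}.
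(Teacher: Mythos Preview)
Your proposal is correct and follows the same inductive strategy as the paper: split $\mathcal{I}_{x,y}$ at the exit time $\tau$ from a ball of radius $\sim |x-y|^{\chi}$, control the pre-$\tau$ piece by Lemma~\ref{lem:pcaf}, handle the post-$\tau$ piece via the strong Markov property, the inductive hypothesis, H\"older's inequality, and Lemmas~\ref{lem:key}, \ref{lem:excp}, and then optimize in $\chi$. The use of the mirror symmetry to place $\widetilde{Z}^y_\tau$ in $\cl{B}(y,r)$ (equivalently, $\tau=\widetilde{\tau}$) is exactly how the paper argues.

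The one substantive difference is the H\"older step. You take the conjugate pair $(1/q_n,\,1/(1-q_n))$ and apply Lemma~\ref{lem:key} with $\theta=1$; the paper instead uses the $\eps$-free pair $a_n=(r_n+1)/r_n$, $b_n=r_n+1$ and applies Lemma~\ref{lem:key} with $\theta=a_n q_{n,\kappa,\eps}\in(0,1]$. The paper's choice is engineered so that, feeding the \emph{same} $\eps$ into Lemma~\ref{lem:excp} and into the $\zt_d$ estimate, the optimized exponent comes out \emph{exactly} $q_{n+1,\kappa,\eps}$, with no auxiliary parameters to absorb. Your pair also works: since $1-q_{n,\kappa,\eps}>1/(r_n+1)=1/b_n$, your second-term exponent $q_n+(1-q_n)(1-\chi-\eps')$ dominates the paper's $q_n+(1-\chi-\eps)/b_n$ whenever $1-\chi-\eps>0$; and at the paper's optimizer $\chi=\eta$ one checks $\eta<1-\eps$ directly from the standing hypothesis $\eps<(2-d+\kappa)/(3-d+\kappa)$. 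Hence taking $\delta=\eps'=\eps$ and evaluating at $\chi=\eta$ already yields an exponent $\ge q_{n+1,\kappa,\eps}$, which suffices since $|x-y|\le 1$. The trade-off is purely bookkeeping: the paper's $\eps$-independent H\"older exponents make the recursion close on the nose and spare you the comparison argument you left open.
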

\begin{proof}
 If $n=1$, the conclusion follows from Lemma~\ref{lem:ind1}. In what follows, we suppose that \eqref{eq:ind0} holds for some $n \in \N$. Then, for any $\eps \in (0, (2-d+\kappa)/(3-d+\kappa))$, there exists $C_1>0$ depending on $d$, $\eps$, $p$, $\kappa$, $R$, $K$, and $n$ such that 
\begin{align}
 \mathcal{I}_{x,y} &\le C_1 |x-y|^{q_{n,\kappa, \eps}} \label{eq:indn}
\end{align}
for any $x,y \in B(p,2^{-n}R)$. 

Let $\chi \in (0,1]$ be a positive number which will be chosen later. We fix $\eps \in (0,(2-d+\kappa)/(3-d+\kappa))$, and $x,y \in B(p,2^{-n-1})$. To simplify the notation, we write 
\begin{align*}
\tau&=\tau_{B(x,2^{-n-1}R|x-y|^{\chi})}^x,\quad
 \widetilde{\tau}=\widetilde{\tau}_{B(y,2^{-n-1}R|x-y|^{\chi})}^y, \quad
 \xi=\xi_{x,y}.
 \end{align*}
In view of Remark~\ref{rem:cp}~(2), we have $\tau=\widetilde{\tau}$. It is straightforward to show that  
\begin{align}
\mathcal{I}_{x,y}&\le E_{x,y}\bigl[ A_{ \tau}^{\mu,x} \wg 1 : \xi \le \tau \bigr] +E_{x,y}\bigl[ A_{ \tau }^{\mu,x} \wg 1:  \tau <\xi \bigr] \notag  \\
&\quad+E_{x,y}\bigl[ (A_{\xi}^{\mu,x}-A_{\tau}^{\mu,x} )\wg 1 : \tau  < \xi \bigr] \notag \\
&= E_{x,y}\bigl[ A_{ \tau}^{\mu,x} \wg 1 \bigr]+E_{x,y}\bigl[ (A_{\xi}^{\mu,x}-A_{\tau }^{\mu,x} )\wg 1 : \tau < \xi \bigr]\notag \\
&=:  %E_{x,y}\bigl[ A_{ \tau}^{\mu,x}  \bigr]+E_{x,y}\bigl[ (A_{\xi}^{\mu,x}-A_{\tau \wg \widetilde{\tau}}^{\mu,x} )\wg 1 : \tau \wg \widetilde{\tau} < \xi \bigr] \notag \\ &
I_1+I_2.\label{eq:indn1} 
\end{align}
On the event $\{\xi > \tau \}$, we have $
A_{\xi}^{\mu,x}-A_{\tau }^{\mu,x}=A_{\xi-\tau}^{\mu,x} \circ \theta_{\tau}\le A_{\xi}^{\mu,x} \circ \theta_{\tau},$
where $\{\theta_t\}_{t \ge 0}$ denotes the shift operator of the coupled process $(Z^x,\widetilde{Z}^y)$. We know from Remark~\ref{rem:cp}~(2) that $(Z^x,\widetilde{Z}^y)$ is a strong Markov process on $\R^d \times \R^d$. Therefore, we obtain that 
\begin{align}
&I_2=E_{x,y}\left[ \left( A_{\xi-\tau}^{\mu,x} \circ \theta_{\tau }\right) \wg 1: \tau <  \xi \right] \notag \\
&\le E_{x,y}\left[ E_{Z_{\tau }^x, \widetilde{Z}_{\tau}^y} 
\left[A_{\xi}^{\mu,x} \wg 1\right]   : \tau <  \xi\right] =E_{x,y}\left[ \mathcal{I}_{Z_{\tau}^x, \widetilde{Z}_{ \tau}^y}   : \tau <  \xi\right]. \label{eq: indn3}
\end{align}
Observe that $Z_{\tau}^x \in \overline{B}(x,2^{-n-1}R)$ and $ \widetilde{Z}_{ \tau}^y=\widetilde{Z}_{\widetilde{\tau}}^y \in \overline{B}(y,2^{-n-1}R)$. Furthermore, by noting that $x,y \in  B(p,2^{-n-1}R)$, we have $|p-Z_{\tau}^x|<2^{-n}R$ and $|p-\widetilde{Z}_{\widetilde{\tau}}^y|<2^{-n}R$. Then, we use \eqref{eq:indn} to obtain that
\begin{align}
&E_{x,y}\left[ \mathcal{I}_{Z_{\tau}^x, \widetilde{Z}_{\tau}^y}   : \tau <  \xi\right]\le C_1E_{x,y}\left[ \left| Z_{\tau}^x-\widetilde{Z}_{\tau}^y \right|^{q_{n,\kappa,\eps}}: \tau <  \xi \right]. \label{eq: indn4}
\end{align}
Let $a_n=(r_n+1)/r_n$ and $b_n=r_n+1$, Then,   $a_n^{-1}+b_n^{-1}=1$. Because $\eps \in (0,(2-d+\kappa)/(3-d+\kappa))$,  \eqref{eq:rel2} implies that $0<a_{n}q_{n,\kappa,\eps}$ and
\[
a_{n}q_{n,\kappa,\eps}=\frac{r_n+1}{r_n} \left( \frac{r_n- \eps r_{n-1}}{r_n+1}-\eps \right)\le 1.
\]
By using H\"{o}lder's inequality, Lemmas~\ref{lem:key} and \ref{lem:excp}, we obtain that
\begin{align}
&E_{x,y}\left[ \bigl| Z_{\tau}^x- \widetilde{Z}_{\tau }^y \bigr|^{q_{n,\kappa,\eps}}  :\tau  <  \xi \right] \notag \\
&\le  E_{x,y} \left[ \bigl| Z_{\tau \wg  \xi }-\widetilde{Z}_{ \tau \wg  \xi}\bigl|^{a_n q_{n,\kappa, \eps}} \right]^{1/a_n}  P_{x,y}( \tau  <  \xi)^{1/b_n} \notag \\
&\le   |x-y|^{a_n q_{n,\kappa, \eps}/a_n} P_{x,y}( \tau<  \xi)^{1/b_n} \notag \\
&\le  C_2|x-y|^{q_{n,\kappa, \eps}+(1-\chi-\eps)/b_n} \label{eq: indn5} ,
\end{align}
where $C_2$ is a positive constant  depending on $\eps$, $R$, and $n$. Therefore, \eqref{eq: indn3}, \eqref{eq: indn4}, and \eqref{eq: indn5} imply
\begin{align}
I_2 \le C_3|x-y|^{q_{n,\kappa, \eps}+(1-\chi-\eps)/b_n} \label{eq: indn5.1} .
\end{align}
Here, $C_3$ is a positive constant depending on $d$, $\eps$, $p$, $\kappa$, $R$, $K$, and $n$.

On the other hand, Lemma~\ref{lem:pcaf} yields 
\begin{align}\label{eq:indn2}
I_1 \le E_{x,y}\bigl[ A_{\tau}^{\mu,x}  \bigr]
& \le  C_4 \zeta_d(2^{-n-1}R|x-y|^{\chi},\kappa),
\end{align}
where $C_4$ is a positive constant depending on $d$, $p$, $\kappa$, $R$,  and $K$. If $d=2$, we use \eqref{eq:log} to obtain that
\begin{align}\label{eq:indn2.0}
\zeta_d(2^{-n-1}R|x-y|^{\chi},\kappa)  &\le (\chi/\eps)  (2^{-n-1}R)^{\kappa-(\eps/\chi)} |x-y|^{\kappa \chi-\eps} .
\end{align}
provided that  $\eps /\chi \le \kappa$. If $d \ge 3$ or $d=1$, 
\begin{align}\label{eq:indn2.1}
\zeta_d(2^{-n-1}R|x-y|^{\chi},\kappa) &=(2^{-n-1}R|x-y|^{\chi})^{2-d+\kappa} \notag \\
&\le (2^{-n-1}R)^{2-d+\kappa}  |x-y|^{\chi(2-d+\kappa)-\eps}.
\end{align}
Therefore, if $\eps \le (2-d+\kappa)\chi$, regardless of the value of $d$, we obtain from \eqref{eq:indn2}, \eqref{eq:indn2.0}, and \eqref{eq:indn2.1} that 
\begin{align}\label{eq: indn2.2}
I_1
& \le  C_5 |x-y|^{(2-d+\kappa) \chi-\eps}.
\end{align}
Here, $C_5$ is a positive constant depending on $d$, $\eps$, $p$, $\kappa$, $R$, $K$, and $n$.

Let $\eta$ be the solution to 
\begin{align*}
(2-d+\kappa) \eta-\eps &=q_{n,\kappa, \eps}+(1-\eta-\eps)/b_n. 
\end{align*} 
Then, a direct calculation and the definition of $b_n$ imply that
\begin{align*}
\eta&=\frac{b_n q_{n,\kappa,\eps}+1+\eps (b_n-1)}{b_n(2-d+\kappa)+1}=\frac{(r_n+1)q_{n,\kappa,\eps}+1+\eps r_n}{(2-d+\kappa)(r_n+1)+1}.
\end{align*}
From the relation $r_{n+1}=(2-d+\kappa) (r_n+1)$ and the definition of $q_{n,\kappa,\eps}$,
\begin{align*}
&(2-d+\kappa) \eta-\eps \notag \\
&=\frac{(2-d+\kappa) (r_n+1)q_{n,\kappa,\eps}+(2-d+\kappa)(1+\eps r_n)}{r_{n+1}+1}-\eps \notag \\
%&=\frac{(2-d+\kappa) \{(r_n-\eps r_{n-1})-\eps(r_n+1)\}+(2-d+\kappa)(1+\eps r_n)}{r_{n+1}+1} -\eps \notag \\
&=\frac{r_{n+1}-\eps r_n}{r_{n+1}+1}-\eps=q_{n+1,\kappa, \eps}.
\end{align*} 
Because $q_{n+1,\kappa, \eps}>0$, we have $(2-d+\kappa) \eta>\eps.$
Noting the fact that $\{r_n\}_{n=1}^{\infty}$ is increasing, we have
\begin{align*}
0<\eta=\frac{r_{n+1}  - \eps r_n}{(2-d+\kappa)(r_{n+1}+1)} \le \frac{r_{n+1}}{(2-d+\kappa)(r_{n+1}+1)}=\frac{r_{n+1}}{r_{n+2}} \le 1.
\end{align*}
Therefore, we can set $\chi=\eta$. By combining \eqref{eq:indn1}, \eqref{eq: indn5.1}, and \eqref{eq: indn2.2}, we see \eqref{eq:ind0} holds for $n+1$. \qed
\end{proof}

Because $\{r_n \}_{n=1}^\infty$ is increasing, we have for any $n \in \N$ and $\eps \in (0,1)$,
\begin{align*}
q_{n,\kappa, \eps} = \frac{r_n-\eps r_{n-1}}{r_{n}+1}-\eps \ge \frac{r_n}{r_{n}+1}-2\eps .
\end{align*}
If $2-d+\kappa \ge 1$, $\lim_{n \to \infty}r_n=\infty$. If $2-d+\kappa \in (0,1)$, 
\[
\lim_{n \to \infty}r_n=\frac{2-d+\kappa}{1-(2-d+\kappa)}.
\] Therefore, we obtain that $
\varliminf_{n \to \infty} q_{n,\kappa, \eps}  \ge (2-d+\kappa) \wedge 1 -2\eps.$ Since the same estimate as Lemma~\ref{lem:ind} holds for $\widetilde{ \mathcal{I}}_{x,y}$, we have the following  corollary.
\begin{corollary}\label{cor:lim}
For any $\eps \in (0, (2-d+\kappa)/(3-d+\kappa))$, there exists a positive constants  $C$ depending on $d$, $\eps$, $p$, $\kappa$, $R$, and $K$ such that 
\begin{align*}
 \mathcal{I}_{x,y}+\widetilde{ \mathcal{I}}_{x,y} &\le C|x-y|^{(2-d+\kappa) \wedge 1 -\eps}
\end{align*}
for any $x,y \in  B(p,2^{-C'}R)$, where $C'$ is a positive constant depending on $d$, $\eps$ and $\kappa$.
\end{corollary}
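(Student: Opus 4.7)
The plan is to combine Lemma~\ref{lem:ind} with the asymptotic analysis of $q_{n,\kappa,\eps}$ performed in the paragraph immediately preceding the statement. Fix $\eps \in (0,(2-d+\kappa)/(3-d+\kappa))$ and apply the lemma with the smaller parameter $\eps_0 := \eps/2$, which still lies in that interval. For each $n \in \N$ this produces a constant $C_1(n)$ (depending on $d,\eps,p,\kappa,R,K,n$) such that
\begin{align*}
\mathcal{I}_{x,y} \le C_1(n)\,|x-y|^{q_{n,\kappa,\eps_0}}, \qquad x,y \in B(p, 2^{-n}R).
\end{align*}

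Next I would select $n$ large. Using the inequality $q_{n,\kappa,\eps_0} \ge r_n/(r_n+1) - 2\eps_0 = r_n/(r_n+1) - \eps$ from the paragraph just above, together with the fact that $r_n/(r_n+1) \to 1$ when $2-d+\kappa \ge 1$ and $r_n/(r_n+1) \to 2-d+\kappa$ when $2-d+\kappa \in (0,1)$, an integer $n = n(d,\kappa,\eps)$ can be chosen so that $q_{n,\kappa,\eps_0} \ge (2-d+\kappa) \wedge 1 - \eps$. Taking $C' := n$ then depends only on $d,\kappa,\eps$ (since $r_n$ is a deterministic recursion in these parameters alone), and for $x,y \in B(p, 2^{-C'}R)$ we obtain
\begin{align*}
\mathcal{I}_{x,y} \le C\,|x-y|^{(2-d+\kappa) \wedge 1 -\eps},
\end{align*}
with $C$ absorbing all dependence on $d,\eps,p,\kappa,R,K$.

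For $\widetilde{\mathcal{I}}_{x,y}$, the point is that the mirror coupling is symmetric in its two marginals: the pair $(\widetilde{Z}^y, Z^x)$ is itself a mirror coupling started from $(y,x)$, the meeting time $\xi_{x,y}$ has the same law, and Lemmas~\ref{lem:key}, \ref{lem:cp}, and \ref{lem:excp} are stated symmetrically (or trivially interchange under swapping the two coordinates). Consequently the inductive argument of Lemmas~\ref{lem:ind1}--\ref{lem:ind} carries over verbatim with $Z^x$ replaced by $\widetilde{Z}^y$, yielding the same bound for $\widetilde{\mathcal{I}}_{x,y}$. Summing the two estimates gives the corollary.

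The main subtlety is purely bookkeeping: the convergences $r_n/(r_n+1) \to 1$ or $\to 2-d+\kappa$ are genuine limits (not merely $\liminf$), so a finite $n$ actually works, and one must track that the resulting $C'$ depends on nothing beyond $d,\kappa,\eps$. Since the recursion $r_n = (2-d+\kappa)(r_{n-1}+1)$ involves only $d$ and $\kappa$, and since the slack $\eps_0 = \eps/2$ leaves an additional $\eps$ to absorb the gap between $r_n/(r_n+1)$ and its limit, the choice of $n$ is straightforward.
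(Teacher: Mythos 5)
Your strategy is exactly the one the paper uses (apply Lemma~\ref{lem:ind} with a reduced parameter, exploit that $r_n/(r_n+1)$ increases to $(2-d+\kappa)\wedge 1$, observe that $\widetilde{\mathcal{I}}_{x,y}$ obeys the same estimate by the symmetry of the mirror coupling), but the specific choice $\eps_0=\eps/2$ does not quite close the argument. With $\eps_0=\eps/2$ the bound you cite gives
\begin{align*}
q_{n,\kappa,\eps_0}\ge \frac{r_n}{r_n+1}-2\eps_0=\frac{r_n}{r_n+1}-\eps ,
\end{align*}
whereas the target exponent is $(2-d+\kappa)\wedge 1-\eps$. Since $\{r_n\}$ is strictly increasing, $r_n/(r_n+1)$ is \emph{strictly} less than its supremum $(2-d+\kappa)\wedge 1$ for every finite $n$, so $\frac{r_n}{r_n+1}-\eps<(2-d+\kappa)\wedge 1-\eps$ always holds; the displayed lower bound can therefore never yield $q_{n,\kappa,\eps_0}\ge (2-d+\kappa)\wedge 1-\eps$. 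Contrary to what you say in your last paragraph, halving $\eps$ leaves zero budget for the gap $(2-d+\kappa)\wedge 1-r_n/(r_n+1)$; it exactly cancels the factor $2$ in $-2\eps_0$. (One can check with the exact formula for $q_{n,\kappa,\eps_0}$ that in the boundary case $\kappa=d-1$ the sequence $q_{n,\kappa,\eps/2}$ increases to $1-\eps$ and never attains it.)

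The fix is purely arithmetic: invoke Lemma~\ref{lem:ind} with any $\eps_0<\eps/2$, for instance $\eps_0=\eps/3$, which is still in $(0,(2-d+\kappa)/(3-d+\kappa))$. Then $q_{n,\kappa,\eps_0}\ge r_n/(r_n+1)-2\eps/3$, and since $r_n/(r_n+1)\to(2-d+\kappa)\wedge 1$ there is an $n=n(d,\kappa,\eps)$ with $r_n/(r_n+1)\ge (2-d+\kappa)\wedge 1-\eps/3$, whence $q_{n,\kappa,\eps_0}\ge(2-d+\kappa)\wedge 1-\eps$. Everything else in your proposal, including taking $C'=n$ and the symmetry argument for $\widetilde{\mathcal{I}}_{x,y}$, is correct and mirrors the paper.
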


We now prove Theorem~\ref{thm:rsf}.
\begin{proof}[of Theorem~\ref{thm:rsf}]
Let $\alpha>0$, $x,y \in \R^d$ and $f \in\mathcal{B}_b^{\ast}(\R^d)$.  Without loss of generality, we may assume that $\|f\|_{\infty}\le1$. We write $\xi=\xi_{x,y}$ for the simplicity. Then, we have
\begin{align}
 &V_{\alpha }^{\mu}f(x)- V_{\alpha}^{\mu}f(y)   \notag \\
 &= E_{x,y}\left[ \int_{\xi}^{\infty}\exp(-\alpha A_t^{\mu,x})f(Z_t^x)\,dA_t^{\mu,x}\right] 
 - E_{x,y}\left[ \int_{\xi}^{\infty}\exp(-\alpha \widetilde{A}_t^{\mu,y})f(\widetilde{Z}_t^y)\,d\widetilde{A}_t^{\mu,y}\right]  \notag \\
&\quad+ E_{x,y}\left[ \int_{0}^{\xi}\exp(-\alpha A_t^{\mu,x})f(Z_t^x)\,dA_t^{\mu,x}\right]- E_{x,y}\left[ \int_{0}^{\xi}\exp(-\alpha \widetilde{A}_t^{\mu,y})f(\widetilde{Z}_t^y)\,d\widetilde{A}_t^{\mu,y}\right] \notag \\
&=:J_1-J_2+J_3-J_4.  \label{eq:rsf} 
\end{align}
Because $\{A_{t}^{\mu,x}\}_{t \ge 0}$ is a PCAF of $(Z^x,\widetilde{Z}^y)$, we have $A_{t+\xi}^{\mu,x}=A_{\xi}^{\mu,x}+A_{t}^{\mu,x}\circ \theta_{\xi}$ and $dA_{t+\xi}^{\mu,x}=dA_{t}^{\mu,x}\circ \theta_{\xi}$. By using  these equations and the strong Markov property of $(Z^x,\widetilde{Z}^y)$, we obtain that
\begin{align*} 
J_1&=E_{x,y}\left[ \int_{0}^{\infty}\exp(-\alpha A_{t+\xi}^{\mu,x})f(Z_{t+\xi}^x)\,dA_{t+\xi}^{\mu,x}\right]  %&=E_{x,y}\left[ \exp(-A_{\xi}^{\mu,x})\int_{0}^{\infty}\exp(- A_{t}^{\mu,x} \circ \theta_\xi)f(\text{proj}^1(Z_{t}^x,\widetilde{Z}_{t}^y) \circ \theta_{\xi})\,dA_{t}^{\mu,x} \circ \theta_{\xi}\right] \notag \\
=E_{x,y}\left[ \exp(-\alpha A_{\xi}^{\mu,x})V_{\alpha}^{\mu}f(Z^x_{\xi})\right],\\
J_2&=E_{x,y}\left[\exp(-\alpha \widetilde{A}_{\xi}^{\mu,y}) V_{\alpha}^{\mu}f(\widetilde{Z}^y_{\xi})\right].
\end{align*}
Since $Z_{\xi}^x=\widetilde{Z}_{\xi}^y$, we have
\begin{align*}
J_1-J_2  &=E_{x,y}\left[ \exp(-\alpha A_{\xi}^{\mu,x})V_{\alpha}^{\mu}f(Z^x_{\xi})\right] -E_{x,y}\left[\exp(-\alpha A_{\xi}^{\mu,x}) V_{\alpha}^{\mu}f(\widetilde{Z}^y_{\xi})\right]  \\
&\quad +E_{x,y}\left[\exp(-\alpha A_{\xi}^{\mu,x}) V_{\alpha}^{\mu}f(\widetilde{Z}^y_{\xi})\right] -E_{x,y}\left[\exp(-\alpha \widetilde{A}_{\xi}^{\mu,y}) V_{\alpha}^{\mu}f(\widetilde{Z}^y_{\xi})\right]   \notag \\
&=0+E_{x,y}\left[\left\{ \exp(- \alpha A_{\xi}^{\mu,x})-\exp(-\alpha \widetilde{A}_{\xi}^{\mu,y}) \right\}V_{\alpha}^{\mu}f(\widetilde{Z}^y_{\xi})\right] \notag.
\end{align*}
Because $ |\alpha V_{\alpha}^{\mu}f(\widetilde{Z}^y_{\xi})| \le \|f\|_{\infty}=1$ and the function $s\mapsto e^{-\alpha s}$ is $\alpha$-Lipschitz continuous on $[0,\infty)$, we obtain that
\begin{align} \label{eq:rsf2-1} 
|J_1-J_2| &\le  E_{x,y}\left[\bigl| A_{\xi}^x-\widetilde{A}_{\xi}^y \bigr| \wg \alpha^{-1} \right] \notag \\
%&\le E_{x,y}\left[ A_{\xi}^x \wg 1\right]+ E_{x,y}\left[ \widetilde{A}_{\xi}^y \wg 1\right]
&\le (1+\alpha^{-1})  (\mathcal{I}_{x,y}+\widetilde{\mathcal{I}}_{x,y}).
\end{align}
From Jensen's inequality,
\begin{align}
|J_3-J_4| &\le \alpha^{-1} E_{x,y}\bigl[1-\exp(- \alpha A_{\xi}^x)\bigr]+ \alpha^{-1} E_{x,y}\bigl[1-\exp(-\alpha \widetilde{A}_{\xi}^y)\bigr] \notag \\
&\le  E_{x,y}\left[ A_{\xi}^x \wg \alpha^{-1}\right]+E_{x,y}\left[ \widetilde{A}_{\xi}^y \wg \alpha^{-1}\right] \notag \\
&\le (1+\alpha^{-1}) (\mathcal{I}_{x,y}+\widetilde{\mathcal{I}}_{x,y}). \label{eq:rsf3} 
\end{align}
By using \eqref{eq:rsf}, \eqref{eq:rsf2-1}, and \eqref{eq:rsf3}, we arrive at
\begin{align}\label{eq:rsf4}
\left|V_{\alpha }^{\mu}f(x)- V_{\alpha}^{\mu}f(y)  \right| &\le 2 (1+\alpha^{-1}) \bigl( \mathcal{I}_{x,y}+\widetilde{\mathcal{I}}_{x,y} \bigr).
\end{align}
Corollary~\ref{cor:lim} and \eqref{eq:rsf4} yield the desired estimate.
``In particular'' part immediately follows from \eqref{eq:resol}.
  \qed
\end{proof}

%\begin{thebibliography}{99}

%\bibitem{revuz-yor}
%    D.\ Revuz, M.\ Yor: \emph{Continuous Martingales and Brownian Motion}.
%    Springer, Berlin 1999 (3rd ed).

%\end{thebibliography}

\end{document}